\title
[A Positivstellensatz for projective real varieties]
{A Positivstellensatz for projective real varieties}
\subjclass[2010]
{Primary
14P05; 
secondary
14C20, 
14N05, 
13J30, 
11E25. 
}
\keywords
{Positivstellensatz, projective varieties, line bundles, totally
real divisors, positive definite forms, sums of squares,
preorderings, real algebraic geometry}
\author
 {Claus Scheiderer}
\address
 {Fachbereich Mathematik und Statistik \\
 Universit\"at Konstanz \\
 78457 Konstanz \\
 Germany}
\email
 {claus.scheiderer@uni-konstanz.de}
\thanks
  {I am grateful to Amir Ali Ahmadi for inspiring discussions
  around the projective Positivstellensatz.}
\theoremstyle{plain}
\newtheorem{prop}{Proposition}[section]
\newtheorem{thm}[prop]{Theorem}
\newtheorem{cor}[prop]{Corollary}
\newtheorem{lem}[prop]{Lemma}
\theoremstyle{definition}
\newtheorem{dfn}[prop]{Definition}
\newtheorem{rem}[prop]{Remark}
\newtheorem{rems}[prop]{Remarks}
\newtheorem{lab}[prop]{}
\newcommand{\isoto}{\overset{\sim}{\to}}
\newcommand{\into}{\hookrightarrow}
\renewcommand{\iff}{\Leftrightarrow}
\renewcommand{\subset}{\subseteq}
\newcommand{\A}{{\mathbb{A}}}
\newcommand{\N}{{\mathbb{N}}}
\renewcommand{\P}{{\mathbb{P}}}
\newcommand{\R}{{\mathbb{R}}}
\newcommand{\m}{{\mathfrak{m}}}
\newcommand{\p}{{\mathfrak{p}}}
\newcommand{\scrO}{{\mathscr O}}
\newcommand{\x}{{\mathtt{x}}}
\newcommand{\y}{{\mathtt{y}}}
\DeclareMathOperator{\Hom}{Hom}
\DeclareMathOperator{\im}{im}
\DeclareMathOperator{\sgn}{sgn}
\DeclareMathOperator{\Spec}{Spec}
\DeclareMathOperator{\Sper}{Sper}
\DeclareMathOperator{\supp}{supp}
\DeclareTextFontCommand{\textnf}{\normalfont}
\newcommand{\reg}{\mathrm{reg}}
\newcommand{\du}{{\scriptscriptstyle\vee}}
\renewcommand{\emptyset}{\varnothing}
\newcommand{\ol}{\overline}
\newcommand{\wt}[1]{\widetilde{#1}}
\renewcommand{\setminus}{\smallsetminus}
\renewcommand{\epsilon}{\varepsilon}
\renewcommand{\theta}{\vartheta}
\newcommand{\ex}{\exists\,}
\newcommand{\bil}[2]{\langle{#1},{#2}\rangle}
\newcommand{\sa}{semi-algebraic}
\begin{document}

\begin{abstract}
Given two positive definite forms $f,\,g\in\R[x_0,\dots,x_n]$, we
prove that $fg^N$ is a sum of squares of forms for all sufficiently
large $N\ge0$. We generalize this result to projective $\R$-varieties
$X$ as follows. Suppose that $X$ is reduced without one-dimensional
irreducible components, and $X(\R)$ is Zariski dense in $X$. Given
everywhere positive global sections $f$ of $L^{\otimes2}$ and $g$ of
$M^{\otimes2}$, where $L$, $M$ are invertible sheaves on $X$ and $M$
is ample, $fg^N$ is a sum of squares of sections of $L\otimes M^
{\otimes N}$ for all large $N\ge0$. In fact we prove a much more
general version with \sa\ constraints, defined by sections of
invertible sheaves. For nonsingular curves and surfaces and
sufficiently regular constraints, the result remains true even if $f$
is just nonnegative. The main tools are local-global principles for
sums of squares, and on the other hand an existence theorem for
totally real global sections of invertible sheaves, which is the
second main result of this paper. For this theorem, $X$ may be
quasi-projective, but again should not have curve components. In
fact, this result is false for curves in general.
\end{abstract}

\maketitle


\section*{Introduction}

Let $f\in\R[x_0,\dots,x_n]$ be a positive definite homogeneous
polynomial. From Stengle's Positivstellensatz it follows that $f$ can
be written as a sum of squares of quotients of forms with positive
definite denominators. In 1995, Reznick \cite{Re} refined this
observation by proving that one can always find such a representation
where the denominators are powers of the form $g=x_0^2+\cdots+x_n^2$.
In other words, there exists an integer $N\ge0$ such that the form
$fg^N$ is a sum of squares of forms. It was observed by the author
(\cite{sch:guide} 2.1.8) that Reznick's theorem can be seen as a
direct consequence of Schm\"udgen's Positivstellensatz \cite{Sm}. As
remarked in \cite{sch:guide}, this argument works in greater
generality and shows, for any two non-constant positive definite
forms $f$ and $g$ with $\deg(g)\mid\deg(f)$, that $fg^{2N}$ is a sum
of squares of forms for sufficiently large $N\ge0$.

The present work resulted from an attempt to remove the degree
restriction in this statement. Before we outline the results of this
paper, we briefly sketch how this can be achieved. By replacing $f$
with $f_1=fl^{2k}$ for suitable $k\ge0$ and some linear form $l$, one
satisfies the degree condition but looses strict positivity. As a
consequence, Schm\"udgen's theorem cannot be applied any more.
Instead one uses the local-global criterion for sums of squares
\cite{sch:surf}, combined with the fact that strictly positive
elements in local rings are sums of squares \cite{sch:local}, to get
a sum of squares representation for $f_1g^{2N}$ and some $N\ge0$. By
an elementary argument, any summand in such a representation is
divisible by $l^{2k}$. So one can cancel this factor and gets a
representation of $fg^{2N}$ as desired.

To find the proper setting for generalizing this proof, one first
observes that the result is genuinely projective, and not affine, in
nature. Leaving technical details aside for the moment, it is quite
clear what a conjectural generalization to projective $\R$-varieties
$X$ and everywhere positive sections of line bundles should be.
Analyzing the proof just outlined for $X=\P^n$, one sees that the
least obvious step in proving such a generalization is to find a
suitable substitute for the linear form $l$. This leads to a
question of its own interest, namely totally real divisors on
$\R$-varieties.

There are two main results in this paper. The first is an existence
theorem for totally real (Cartier) divisors on quasi-projective real
varieties, proved in Sect.~\ref{secexcompsec}. More specifically,
if $X$ is a reduced quasi-projective $\R$-scheme without
one-dimen\-sional irreducible component and with $X(\R)$ Zariski
dense in $X$, and if $D$ is a divisor on $X$, then for any ample
divisor $E$ and any sufficiently large integer $n$ there exists a
totally real (reduced) effective divisor that is linearly equivalent
to $D+nE$. Here we call an effective divisor $D$ totally real if the
$\R$-points of (the closed subscheme) $D$ are Zariski dense in $\supp
(D)$. In fact we prove a more general version of this result which is
relative to any Zariski dense \sa\ subset $S$ of $X(\R)$ (Theorem
\ref{extotreschnitt}). The proof is based on Bertini's first theorem.
As a corollary we obtain that any divisor on $X$ is equivalent to a
difference of two totally real (reduced) effective divisors, and even
to a single such divisor when $X$ is affine; again we provide a more
general version relative to a \sa\ set $S$. For projective curves,
Theorem \ref{extotreschnitt} and its corollaries fail in general, see
the discussion starting in \ref{curvrems1}. The result can be saved
for nonsingular curves under stronger assumptions on~$S$.

The second main result is a general Positivstellensatz for reduced
projective $\R$-schemes $X$ (Theorem \ref{mainthmtent}), proved in
Sect.~\ref{secppss}. Basically it says that a strictly positive
global section $f$ of an invertible sheaf $L^{\otimes2}$ becomes a
sum of squares after multiplication with a sufficiently high even
power of any nowhere (on $X(\R)$) vanishing section of any ample
invertible sheaf. The substitute in the proof for the linear form $l$
above is a totally real section of a suitable invertible sheaf. The
existence of such a section is guaranteed by Theorem
\ref{extotreschnitt}. We have to assume that $X$ has no
one-dimensional irreducible component since \ref{extotreschnitt} is
known to fail for curves. We do not know if the dimension restriction
for the irreducible components of $X$ can be removed in Theorem
\ref{mainthmtent}. Again, the version we prove is considerably more
general, and is a preordering-type statement relative to \sa\
constraints. For nonsingular curves and surfaces we can even prove a
Nichtnegativstellensatz ($f$~is allowed to have zeros), as long as
the constraints are sufficiently regular.

As a very particular concrete application it follows from Theorem
\ref{mainthmtent} that, for any strictly positive form $f\in\R[x_0,
\dots,x_n]$, there exists some odd power $f^{2m+1}$ that is a sum of
squares (Corollary \ref{oddpow}). Our Positivstellensatz (in the
version for $\P^n$) was recently applied by Ahmadi and Parrilo
\cite{AP} to time continuous dynamical systems.

In Sect.~\ref{secdiv} we introduce the concept of totally real
divisors and its generalization relative to a \sa\ set, the weakly
compatible divisors. Moreover we define the sign of a section of an
invertible sheaf $L^{\otimes2}$ at a real point, and we discuss basic
properties of these notions that are used later.

For proving (and even formulating) the results of this paper, it is
necessary to use the language of schemes, although some particular
cases can be phrased in a more naive language. In a few places we
also have to work with the real spectrum. A brief discussion of the
concepts used is given in a first section with preliminaries.


\section{Notations and preliminaries}\label{secprelims}%

\begin{lab}
Let $k$ be a field and $X$ a $k$-scheme of finite type. For any
$k$-algebra $E$ we write $X(E)=\Hom_k(\Spec E,X)$ for the set of
$E$-valued points of $X$. We often confuse an element of $X(k)$ with
its closed image point in $X$. By $X_\reg$ we denote the regular
locus of $X$, i.e.\ the set of $x\in X$ for which the local ring
$\scrO_{X,x}$ is regular; this is an open subset of~$X$.
\end{lab}

\begin{lab}
A general reference for \sa\ sets and the real spectrum is
\cite{BCR}. The real spectrum of the ring $A$ is denoted $\Sper(A)$.
Let $R$ be a real closed field, and let $X$ be a separated $R$-scheme
of finite type. A subset $S$ of $X(R)$ is \sa\ if $S\cap U(R)$ is a
\sa\ subset of $U(R)$ for every open affine subset $U$ of $X$. From
the order topology of $R$ we get a topology on $X(R)$ that is
sometimes referred to as the euclidean topology (as opposed to the
Zariski topology on $X$). If $X$ is affine, then $X(R)$ is in the
natural way a topological subspace of $\Sper R[X]$. With each \sa\
subset $S$ of $X(R)$ one associates a constructible subset $\wt S$ of
$\Sper R[X]$, in such a way that $S=X(R)\cap\wt S$; in fact, $\wt S$
is defined by the same system of inequalities inside $\Sper R[X]$ as
$S$ inside $X(R)$. It is well-known that $\wt S$ is open (resp.\
closed) in $\Sper R[X]$ if (and only if) the same is true for $S$ in
$X(R)$. It would be natural to extend the construction of the real
spectrum and of the tilde operator to non-affine $X$, but this is not
needed here.
\end{lab}

Let $R$ be a real closed field. The following is a well-known
consequence of the Artin-Lang theorem. For lack of a suitable
reference we give a short indication of the proof.

\begin{prop}\label{artinlang}%
Let $X$ be an integral $R$-scheme of finite type, and let $S\subset
X(R)$ be a \sa\ subset. Then $S$ is Zariski dense in $X$ if and only
if $S$ contains a non-empty (euclidean) open subset of $X_\reg(R)$.
\end{prop}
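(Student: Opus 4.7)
The plan is to treat the two implications separately. For the ``if'' direction, assume $U\subset S$ is a nonempty euclidean open subset of $X_\reg(R)$. At any $x\in U$, Cohen's structure theorem gives $\wh{\scrO_{X,x}}\cong R[[t_1,\dots,t_d]]$ with $d=\dim X$, and a regular system of parameters $t_i$ furnishes local euclidean coordinates on $X_\reg(R)$ near $x$ via the inverse function theorem over~$R$. If the Zariski closure of $U$ in $X$ were a proper subset $Y\subsetneq X$, one could find a nonzero regular function $f$ defined near $x$ vanishing on $Y$, hence on $U$; its Taylor series at $x$ would then vanish on a euclidean open of $R^d$, forcing $f=0$ in the completion and hence in $\scrO_{X,x}$---a contradiction. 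So $U$ is Zariski dense in $X$.

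For the harder ``only if'' direction, I would reduce first to $X=\Spec A$ affine (if $V\subset X$ is a nonempty affine open, then $S\cap V(R)$ is still Zariski dense in $V$, since irreducibility of $X$ and the inclusion $S\subset(S\cap V(R))\cup(X\setminus V)(R)$ force $\overline{S\cap V(R)}=X$). Next, decompose $S$ as a finite union of basic semi-algebraic pieces; by irreducibility of $X$, at least one piece $S'=\{f_1\ge0,\dots,f_m\ge0,\,h_1\ne0,\dots,h_l\ne0\}$ is still Zariski dense in $X$. (Any equality constraint $g=0$ would force $g=0$ in the domain $A$ and is trivial; similarly each $h_j$ is nonzero in $A$.) The tilde-extension $\wt{S'}\subset\Sper A$ then contains an ordering $\alpha$ of the function field $R(X)$: this is the standard fact that a constructible subset of $\Sper A$ associated to a Zariski dense semi-algebraic set must contain a point whose support is the generic point of $A$. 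In a field, $f_k\ge_\alpha 0$ together with $f_k\ne0$ in $A$ gives $f_k>_\alpha 0$, so $\alpha$ lies in the open basic subset $\wt W=\{f_k>0,\,h_j\ne0\}\subset\Sper A$.

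By the Artin--Lang theorem, applied to any affine open $V\subset X_\reg$ (whose function field equals $R(X)$, so that $\alpha$ restricts to a point of $\wt W\cap\Sper R[V]$), the nonempty open basic set $\wt W\cap\Sper R[V]$ contains an $R$-point of $V$, giving $y\in X_\reg(R)$ with $f_k(y)>0$ and $h_j(y)\ne0$. Thus $W\cap X_\reg(R)$, the intersection of the open set $W=\wt W\cap X(R)$ with $X_\reg(R)$, contains $y$ and is a nonempty euclidean open subset of $X_\reg(R)$ contained in $S'\subset S$.

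The main obstacle is the step from Zariski density of $\wt{S'}$ to the existence of an ordering of $R(X)$ in $\wt{S'}$ satisfying the strict-inequality version of the constraints; this is the heart of the Artin--Lang connection and where the hypothesis that $X$ is integral (so that $A$ is a domain with a well-defined function field) enters in an essential way.
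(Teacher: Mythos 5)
Your proposal is correct, but it follows a genuinely different route from the paper. The paper reduces both implications to one statement---on a nonsingular affine variety the zero set of a nonzero regular function has empty euclidean interior---and proves it by choosing an ordering $\alpha$ of $R(X)$ making the defining inequalities of a nonempty basic open set positive and then applying the refined (sign-preserving) Artin--Lang theorem to produce a point of that set where $g\ne0$. You split the equivalence differently: for ``open $\Rightarrow$ Zariski dense'' you argue through the completion $\wh{\scrO}_{X,x}\cong R[[t_1,\dots,t_d]]$ and an identity principle, and for ``Zariski dense $\Rightarrow$ contains an open set'' you go through the real spectrum (a Zariski dense semi-algebraic set has $\wt S$ meeting the fibre over the generic point, via the equality of the dimension of a semi-algebraic set with that of its Zariski closure and the dimension theory of $\Sper A$ in \cite{BCR}) and then classical Artin--Lang; both steps are legitimate. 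Two remarks. First, your Taylor-series step is immediate over $\R$ by analyticity, but the proposition is stated and later used over an arbitrary real closed field $R$; there, ``vanishing on a euclidean open set forces the image in $\wh{\scrO}_{X,x}$ to vanish'' is not purely formal: one needs that in the semi-algebraic chart provided by the implicit function theorem the regular function becomes a Nash function whose Taylor expansion is compatible with the algebraic completion map (\cite{BCR}, chapter on Nash functions), together with Krull intersection for the injectivity of $\scrO_{X,x}\to\wh{\scrO}_{X,x}$; with these standard inputs the argument closes, so this is a gap of detail rather than of substance. Second, your ``only if'' direction can be obtained more cheaply: decomposing $S$ into pieces of the form $\{g=0,\ f_1>0,\dots,f_s>0\}$, a Zariski dense piece must have trivial equation part, hence is already euclidean open and meets $X_\reg(R)$ because it is not contained in the singular locus; no real-spectrum dimension theory or Artin--Lang is needed there. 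In effect you spend Artin--Lang on the direction where the paper gets by elementarily, and replace the paper's Artin--Lang argument by a completion argument in the other direction---a trade that buys a more geometric proof of the key implication at the cost of Nash-function input over general $R$.
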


\begin{proof}
One easily reduces to showing: If $X$ is affine and nonsingular, then
for any $0\ne g\in R[X]$, the zero set of $g$ in $X(R)$ has empty
interior. To prove this, assume that $f_1,\dots,f_r\in R[X]$ are such
that $U:=\{\xi\in X(R)\colon f_i(\xi)>0$, $i=1,\dots,r\}$ is nonempty
and $g\equiv0$ on $U$. The function field $R(X)$ of $X$ has an
ordering $\alpha$ which makes $f_1,\dots,f_r$ positive.
By the (refined) Artin-Lang theorem (see for instance \cite{Be} Thm.\
1.3), there exists $\xi\in U$ with $\sgn_\alpha(g)=\sgn g(\xi)$, and
in particular, $g(\xi)\ne0$, contradiction.
\end{proof}

\begin{cor}
For an integral $R$-scheme $X$ of finite type, $X(R)$ is Zariski
dense in $X$ if and only if $X$ has a nonsingular $R$-point, if and
only if the function field $R(X)$ of $X$ is real, i.e.\ can be
ordered.
\end{cor}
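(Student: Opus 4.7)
The plan is to establish the chain of equivalences (1) $\iff$ (2) $\iff$ (3), where (1) is Zariski density of $X(R)$ in $X$, (2) is the existence of a nonsingular $R$-point, and (3) is the realness of $R(X)$.

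For (1) $\iff$ (2), I would simply specialize Proposition~\ref{artinlang} to the case $S=X(R)$. Since $X_\reg$ is Zariski open in $X$, we have $X_\reg(R)\subset X(R)$ and $X_\reg(R)$ is (euclidean) open in $X(R)$. Hence the condition that $X(R)$ contain a non-empty euclidean open subset of $X_\reg(R)$ degenerates to $X_\reg(R)\ne\emptyset$, which is precisely the existence of a nonsingular $R$-point. Thus (1) $\iff$ (2) is a direct corollary of the preceding proposition.

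For (2) $\To$ (3), given a nonsingular $R$-point $x$, the local ring $\scrO_{X,x}$ is a regular local $R$-algebra with residue field $R$, and Cohen's structure theorem identifies its completion with $R[[t_1,\dots,t_d]]$, where $d=\dim X$. The completion map is injective by faithful flatness, and passing to fraction fields yields an embedding of $R(X)$ into $\text{Frac}\bigl(R[[t_1,\dots,t_d]]\bigr)$. The latter embeds further into the iterated Laurent series field $R((t_1))\cdots((t_d))$, on which an ordering is constructed inductively by declaring each $t_i$ to be a positive infinitesimal over the previous. Pulling this ordering back shows $R(X)$ is real.

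For (3) $\To$ (2), I would pick any non-empty affine open subset $U$ of $X_\reg$, which exists because $X$ is integral (so the generic point is regular and $X_\reg$ is open dense in $X$). The coordinate ring $R[U]$ is then a finitely generated $R$-domain whose fraction field $R(U)=R(X)$ is real by hypothesis. The Artin--Lang theorem (as already invoked in the proof of Proposition~\ref{artinlang}) delivers an $R$-point of $U$, which is automatically a nonsingular $R$-point of $X$. The main obstacle is the implication (2) $\To$ (3): the other two directions are essentially immediate from Proposition~\ref{artinlang} and Artin--Lang, whereas (2) $\To$ (3) requires the structural ingredient of the Cohen theorem together with the (well-known but non-trivial) orderability of the formal power series fraction field.
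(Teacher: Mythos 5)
Your argument is correct, but it takes a different route from the paper on the main point. The paper's proof of this corollary is essentially a citation: the equivalence between Zariski density of $X(R)$ and $X_\reg(R)\ne\emptyset$ is left implicit as the specialization $S=X(R)$ of Proposition \ref{artinlang} (exactly as you do it), and the remaining equivalence ``$X_\reg(R)\ne\emptyset$ $\iff$ $R(X)$ real'' is simply quoted from Becker \cite{Be} without proof. You instead prove that equivalence from scratch: for (2)~$\To$~(3) you embed $R(X)=\mathrm{Frac}(\scrO_{X,x})$ into the fraction field of the completion $R[[t_1,\dots,t_d]]$ via Cohen's structure theorem and order it inside the iterated Laurent series field $R((t_1))\cdots((t_d))$ with infinitesimal variables; for (3)~$\To$~(2) you apply Artin--Lang to an affine open of $X_\reg$, which is the same ingredient the paper already uses in the proof of Proposition \ref{artinlang}. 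All steps check out (the injectivity of completion, $\mathrm{Frac}(\scrO_{X,x})=R(X)$ by integrality, $d=\dim\scrO_{X,x}$ at the closed $R$-point, and openness and density of $X_\reg$ are all fine). What the comparison buys: your version is self-contained and makes visible the valuation-theoretic mechanism (a regular $R$-point yields orderings of $R(X)$ centered there), at the cost of invoking completion and Cohen's theorem; the paper's version is shorter but delegates exactly this nontrivial direction to \cite{Be}, where it is proved by essentially the same kind of real-place/valuation argument you sketch.
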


\begin{proof}
The equivalence $X_\reg(R)\ne\emptyset$ $\iff$ $R(X)$ real is proved
in \cite{Be}.
\end{proof}


\section{Divisors and invertible sheaves}\label{secdiv}%

In this section let $R$ be a real closed field, and let always $X$ be
a reduced separated $R$-scheme of finite type.

\begin{lab}\label{dfndivwc}%
By a divisor on $X$ we always mean a Cartier divisor. An effective
Cartier divisor on $X$ is the same as a closed subscheme $D$ of $X$
whose sheaf of ideals is everywhere locally generated by one element
that is not a zero divisor. The support $\supp(D)$ of $D$ is the
closed subset of $X$ underlying $D$.

Let $D$ be an effective divisor on $X$, and let $S\subset X(R)$ be a
\sa\ set. We say that $D$ is \emph{weakly compatible with $S$} if
$S\cap D(R)$ is Zariski dense in $\supp(D)$. We call $D$
\emph{totally real} if $D$ is weakly compatible with $S=X(R)$.
\end{lab}

\begin{rem}
In \cite{PS}, the notion of compatibility between a \sa\ set
$S\subset X(R)$ and a prime Weil divisor $Y$ on $X$ was introduced
(in the case where $X$ is normal). The notion from \cite{PS} is
stronger (at least when $S$ is closed) than weak compatibility as
defined in \ref{dfndivwc}, which explains our choice of terminology
here. The existence results for weakly compatible sections proved in
Sect.~\ref{secexcompsec} below remain true for the stronger notion
of compatibility (suitably adapted to the more general situation when
$X$ is not normal). However, that notion is more technical and is not
needed here, which is why we work with the easier concept of weak
compatibility.
\end{rem}

\begin{lab}
Let $L$ be an invertible sheaf (locally free $\scrO_X$-module of rank
one) on $X$. Given a global section $s$ of $L$, let $Z(s)$ be the
closed zero subscheme of $s$.
The section $s$ is said to be \emph{regular} if $Z(s)$ is an
effective divisor on $X$. Since $X$ is reduced, it is equivalent that
$\supp(s)$ does not contain any irreducible component of $X$.
A regular section $s$ of $L$ is called \emph{reduced} if the closed
subscheme $Z(s)$ of $X$ is reduced. We say that the regular section
$s$ is \emph{weakly compatible with $S$} if the effective divisor
$Z(s)$ is weakly compatible with $S$, c.f.\ Definition
\ref{dfndivwc}. Again, we call $s$ \emph{totally real} if $s$ is
weakly compatible with $S=X(R)$.

If $L$, $M$ are invertible sheaves and $s$ resp.\ $t$ are global
sections of $L$ resp.\ $M$, we denote by $st$ the product $s\otimes
t$, seen as a global section of $L\otimes M$.
\end{lab}

\begin{lab}
Let $L$, $L'$ be invertible sheaves on $X$, and let $\phi\colon
L\otimes L\isoto L'$ be an isomorphism. Fixing $\phi$, we can talk of
the sign of global (or local) sections of $L'$ at points $\xi\in
X(R)$.
Namely, given $f\in H^0(X,L')$ and $\xi\in X(R)$, the \emph{sign of
$f$ at $\xi$} (with respect to $\phi$) is defined as
$$\sgn_{\xi,\phi}(f)\>:=\>\sgn_\xi(a)\ \in\,\{-1,0,1\},$$
where $U\subset X$ is an open set with $\xi\in U(R)$ over which $L$
is trivial, $s\in H^0(U,L)$ is a generator of $L|_U$ and $a\in\scrO_X
(U)$ is defined by $f|_U=a\cdot\phi(s^2)$. We say that $f$ is
\emph{nonnegative}, resp.\ \emph{(strictly) positive} (with respect
to $\phi$), if $\sgn_{\xi,\phi}(f)\ge0$, resp.\ $>0$, for every $\xi
\in X(R)$. When $\phi$ is understood, we simply write $f(\xi)\ge0$ or
$f(\xi)>0$, instead of $\sgn_{\xi,\phi}(f)\ge0$ or $\sgn_{\xi,\phi}
(f)=1$, respectively.

It is clear that the definition of $\sgn_{\xi,\phi}(f)$ does not
depend on the choice of either $U$ or $s$. It does depend, however,
on the square root $L$ of $L'$ and on the choice of the isomorphism
$\phi$.
\end{lab}

\begin{rems}\label{convent}%
\hfil

1.\
Throughout this paper, we will only talk of signs of sections of
invertible sheaves $L'$ that are given in the form $L'=L\otimes L$.
These signs are always understood with respect ot the identity
isomorphism of $L\otimes L$. Therefore, we will consequently suppress
mentioning the isomorphism $\phi$. For example, if $s\in H^0(X,
L^{\otimes2})$ and $t\in H^0(X,M^{\otimes2})$, then for the signs of
$st\in H^0(X,(L\otimes M)^{\otimes2})$ we have $\sgn_\xi(st)=\sgn_\xi
(s)\cdot\sgn_\xi(t)$, for $\xi\in X(R)$.
\smallskip

2.\
Recall that $X$ denotes a (reduced separated) $R$-scheme of finite
type. If $L$ is an invertible sheaf on $X$ and $f\in H^0(X,L^
{\otimes2})$, then $S=\{\xi\in X(R)\colon f(\xi)\ge0\}$ is a closed
\sa\ subset of $X(R)$.
\end{rems}

\begin{dfn}\label{dividesects}%
Let $L$, $M$ be invertible sheaves on a scheme $X$, and let $s$
resp.\ $t$ be global sections of $L$ resp.\ $M$. We say that
\emph{$s$ divides $t$}, if there is a global section $u$ of $L^\du
\otimes M$ with $t=su$.
\end{dfn}

The following lemma is clear:

\begin{lem}\label{kuerzen}%
Let $L$, $M$ be invertible sheaves on $X$, let $s\in H^0(X,L)$ and
$t\in H^0(X,M)$.
\begin{itemize}
\item[(a)]
$s$ divides $t$ if and only if $X$ has a covering by open subsets
$U_\alpha$ such that $s|_{U_\alpha}$ divides $t|_{U_\alpha}$ for all
$\alpha$.
\item[(b)]
If $s$ is regular, and if $t'\in H^0(X,M)$ satisfies $st=st'$ in $H^0
(X,L\otimes M)$, then $t=t'$.
\qed
\end{itemize}
\end{lem}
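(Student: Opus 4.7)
Both parts reduce to studying multiplication by $s$ as an $\scrO_X$-linear sheaf map $\phi_s\colon L^\du\otimes M\to M$, $v\mapsto sv$. In (a) the ``only if'' direction is immediate---the covering $\{X\}$ itself witnesses the claim---so the content lies in the converse.

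For the ``if'' direction of (a), my plan is to exploit the cokernel sheaf $\mathcal{N}=\mathrm{coker}(\phi_s)=M/s\cdot(L^\du\otimes M)$, which is a quasi-coherent $\scrO_X$-module. The hypothesis that $t|_{U_\alpha}=s|_{U_\alpha}u_\alpha$ on each piece of the cover says exactly that the image of $t$ in $\mathcal{N}(U_\alpha)$ is zero; the sheaf axiom for $\mathcal{N}$ then forces the image of $t$ in $\mathcal{N}(X)$ to vanish as well. To pass from this to an actual global $u\in H^0(X,L^\du\otimes M)$ with $su=t$, I will refine the covering to an affine open cover $\{V_\beta\}$ trivializing both $L$ and $M$. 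On each $V_\beta=\Spec A_\beta$ the assertion reduces to the ring-theoretic fact that, given $s,t\in A_\beta$, if $t\in sA_{\beta,f_\gamma}$ on an affine cover $\{D(f_\gamma)\}$ of $\Spec A_\beta$, then $t\in sA_\beta$; this is just the sheaf axiom applied to the quasi-coherent sheaf associated to the $A_\beta$-module $A_\beta/sA_\beta$. The local lifts so produced are then patched on overlaps, the ambiguity being absorbed by sections of $\ker\phi_s$.

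For (b), rewrite the hypothesis as $s(t-t')=0$ in $H^0(X,L\otimes M)$. That $s$ is regular means that $s$ is a non-zero-divisor in every stalk $\scrO_{X,x}$; since $M_x$ is a free rank-one $\scrO_{X,x}$-module, multiplication by $s$ is injective on $M_x$ for all $x\in X$. Hence $\phi_s\colon M\to L\otimes M$ is an injective morphism of sheaves, and in particular injective on global sections, so $t=t'$.

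The only real subtlety I anticipate is the patching step in (a)---the gap between ``$t$ lies in the image subsheaf of $\phi_s$'' and ``$t$ lies in the image of $H^0(\phi_s)$''. This is the usual $H^1(\ker\phi_s)$-type obstruction, which is defused by restricting to an affine refinement on which $L$ and $M$ are trivial and quasi-coherent cohomology vanishes; it is presumably why the author labels the lemma ``clear''.
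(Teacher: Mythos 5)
Part (b) of your proposal is correct and is surely the intended argument: a regular section is by definition locally a non-zero-divisor, so multiplication by $s$ is injective on every stalk of $M$, hence on global sections. In (a), however, there is a genuine gap at the patching step, and the repair you suggest does not work. On an affine cover $\{V_\beta\}$ trivializing $L$ and $M$ you do correctly produce local quotients $u_\beta$ with $su_\beta=t|_{V_\beta}$; but to glue them you must turn the \v{C}ech $1$-cocycle $(u_\beta-u_{\beta'})$ with values in $\ker\phi_s$ into a coboundary, and for a quasi-coherent sheaf on a separated scheme the \v{C}ech $H^1$ of an affine cover computes the \emph{global} group $H^1(X,\ker\phi_s)$. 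Vanishing of quasi-coherent cohomology on each affine piece $V_\beta$ is irrelevant to this obstruction; it would only help if $X$ itself were affine, whereas the paper applies the lemma to projective $X$. Passing to an affine refinement merely re-proves divisibility on each affine piece, which the hypothesis already gives, and absorbs none of the gluing ambiguity.

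The gap is not cosmetic: if $s$ is allowed to vanish identically on an irreducible component (the lemma as printed does not assume $s$ regular in (a)), the ``if'' direction can actually fail on non-affine $X$. For instance, let $X=C\cup\ell\subset\P^2$ with $C$ a smooth conic and $\ell$ a line meeting $C$ transversally in two real points $P,Q$, let $L=\scrO_X(1)$ and $s$ the equation of $\ell$, so $s\equiv0$ on $\ell$ while $s|_C$ vanishes exactly at $P,Q$. Choose an invertible sheaf $F$ on $X$ with $F|_C\cong\scrO_{\P^1}(2)$ and $F|_\ell\cong\scrO_\ell$ (glue the fibers at the two nodes arbitrarily), and pick $u_C\in H^0(C,F|_C)$ whose pair of values at $(P,Q)$ lies outside the line realized by $H^0(\ell,F|_\ell)\cong\R$; this is possible since evaluation $H^0(C,F|_C)\to F_P\oplus F_Q$ is surjective. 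Put $M=L\otimes F$ and let $t$ be the section equal to $s|_C\,u_C$ on $C$ and to $0$ on $\ell$. Then $s$ divides $t$ in every local ring (at the nodes take the quotient $u_C$ on the branch $C$ and a constant on the branch $\ell$), hence on a suitable open cover, but no global $u$ with $su=t$ exists: restricting to the integral curve $C$ and cancelling the regular section $s|_C$ (your own part (b)) forces $u|_C=u_C$, whose values at $P,Q$ cannot be matched on $\ell$. So (a) should be read, and is only used in the paper (Lemmas \ref{primeigsects} and \ref{keyproptotre}, and the cancellation of $h^2$ in the proof of Theorem \ref{mainthmtent}), with $s$ regular: then $\ker\phi_s=0$, the local quotients are unique, agree on overlaps and glue by the sheaf axiom -- which is the ``clear'' proof, and is exactly the injectivity statement your argument for (b) provides. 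Your argument for (a) is fine as written only when $X$ is affine.
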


\begin{lem}\label{primeigsects}%
Let $L$, $M$, $N$ be invertible sheaves on $X$ with regular global
sections $s$, $t$, $u$, respectively, and assume that $s$ divides
$tu$. If $Z(s)$ is reduced, and if $t$ does not vanish identically on
any irreducible component of $\supp(s)$, then $s$ divides $u$.
\end{lem}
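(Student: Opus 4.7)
The plan is to reduce the statement to a commutative algebra fact and then glue local solutions, using Lemma \ref{kuerzen} for both the reduction and the patching. First I would choose an affine open cover $\{U_\alpha\}$ of $X$ over which each of $L$, $M$, $N$ is trivial. After fixing local generators, the sections $s$, $t$, $u$ correspond to elements $s_\alpha$, $t_\alpha$, $u_\alpha$ of the Noetherian ring $A_\alpha:=\scrO_X(U_\alpha)$, all non-zero-divisors by regularity of the sections. By Lemma \ref{kuerzen}(a), the hypothesis $s\mid tu$ translates into $t_\alpha u_\alpha\in(s_\alpha)$ for every $\alpha$.

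The key local statement I would isolate is the following: if $A$ is a Noetherian ring, $s\in A$ is a non-zero-divisor such that $A/(s)$ is reduced, and $t,u\in A$ satisfy $tu\in(s)$ with $t$ lying outside every minimal prime of $(s)$, then $u\in(s)$. This is a one-line argument: since $(s)$ is a radical ideal we may write $(s)=\mathfrak{p}_1\cap\cdots\cap\mathfrak{p}_r$ as the intersection of its (finitely many) minimal primes; for each $i$ the relation $tu\in\mathfrak{p}_i$ together with $t\notin\mathfrak{p}_i$ forces $u\in\mathfrak{p}_i$, and intersecting gives $u\in(s)$.

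To invoke this on $U_\alpha$ I would verify the local incarnations of the two hypotheses. Reducedness of the closed subscheme $Z(s)$ is equivalent to $A_\alpha/(s_\alpha)$ being reduced for every $\alpha$. The minimal primes of $(s_\alpha)$ in $A_\alpha$ are in natural bijection with the irreducible components of $\supp(s)\cap U_\alpha$, and each such component is the (open, hence irreducible) trace on $U_\alpha$ of some irreducible component of $\supp(s)$; hence the hypothesis that $t$ vanishes identically on no irreducible component of $\supp(s)$ is exactly the statement that $t_\alpha$ avoids every minimal prime of $(s_\alpha)$. The local statement then provides $v_\alpha\in A_\alpha$, i.e.\ a section $v_\alpha$ of $L^\du\otimes N$ over $U_\alpha$, such that $u|_{U_\alpha}=s|_{U_\alpha}\cdot v_\alpha$.

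It remains to patch. On overlaps $U_\alpha\cap U_\beta$ we have $s\cdot v_\alpha=u=s\cdot v_\beta$, and regularity of $s$ combined with Lemma \ref{kuerzen}(b) yields $v_\alpha=v_\beta$. The $v_\alpha$ therefore glue to a global section $v\in H^0(X,L^\du\otimes N)$ with $u=sv$, which is the desired conclusion. I do not anticipate a genuine obstacle; the point requiring the most care is the bookkeeping that translates the global condition on $t$ and the irreducible components of $\supp(s)$ into the avoidance-of-minimal-primes condition inside each local chart.
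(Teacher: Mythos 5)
Your argument is correct and is essentially the paper's own proof: localize to affine charts, use that $(s)$ is radical with minimal primes $\p_1,\dots,\p_r$ corresponding to the components of $\supp(s)$, note $t\notin\p_1\cup\cdots\cup\p_r$, conclude $u\in\bigcap_i\p_i=(s)$, and glue via Lemma \ref{kuerzen}. You merely spell out the localization and patching steps that the paper leaves implicit.
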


\begin{proof}
We can localize und assume that $s,t,u$ are not zero divisors in the
ring $A$, and that $(s)=\sqrt{(s)}$. If $\p_1,\dots,\p_r$ are the
minimal prime ideals containing $s$, then the hypothesis says
$t\notin\p_1\cup\cdots\cup\p_r$. Since $tu\in(s)$ by hypothesis, we
conclude $u\in\bigcap_i\p_i=(s)$.
\end{proof}

The following lemma is the technical reason why weak compatibility is
a key notion for this paper:

\begin{lem}\label{keyproptotre}%
Let $X$ be an $R$-scheme of finite type, and let $S\subset X(R)$ be a
\sa\ set.
Let $L$, $M$ be invertible sheaves on $X$, and let $s\in H^0(X,L)$ be
a regular and reduced global section that is weakly compatible with
$S$.
\begin{itemize}
\item[(a)]
If $a$, $b\in H^0(X,M^{\otimes2})$ are nonnegative on $S$, and if $s$
divides $a+b$, then $s$ divides both $a$ and $b$.
\item[(b)]
If $a_1,\dots,a_r\in H^0(X,M)$ and $s$ divides $a_1^2+\cdots+a_r^2$,
then $s$ divides $a_1,\dots,a_r$.
\end{itemize}
\end{lem}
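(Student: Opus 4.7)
The plan is to reduce both parts to the local observation that, for the regular section $s$ of $L$ cutting out the effective Cartier divisor $Z(s)$, divisibility of a global section $c$ by $s$ is equivalent to the vanishing of $c|_{Z(s)}$ on $Z(s)$, together with Lemma~\ref{kuerzen}(a) which makes divisibility a local property.

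For part~(a), the hypothesis $s\mid a+b$ gives $(a+b)|_{Z(s)}=0$; passing to fibers, $a(\xi)+b(\xi)=0$ in $(M^{\otimes2})_\xi$ for every $\xi\in Z(s)(R)$. Combined with $a(\xi),\,b(\xi)\ge0$ for $\xi\in S$, this forces $a(\xi)=b(\xi)=0$ at every $\xi\in S\cap Z(s)(R)$. The zero locus of $a|_{Z(s)}$ is a closed subscheme of $Z(s)$ whose underlying set is a closed subset of $\supp(s)$ containing $S\cap Z(s)(R)$. Weak compatibility of $s$ with $S$ says that $S\cap Z(s)(R)$ is Zariski dense in $\supp(s)$, so the support of the zero locus of $a|_{Z(s)}$ is all of $|Z(s)|$. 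Since $Z(s)$ is reduced, this set-theoretic statement upgrades to a scheme-theoretic one: the zero locus coincides with $Z(s)$, i.e.\ $a|_{Z(s)}=0$. Hence $s\mid a$, and by symmetry $s\mid b$.

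For part~(b), write $a_1^2+\cdots+a_r^2=a_i^2+\sum_{j\ne i}a_j^2$ for each $i$. Each summand is a sum of squares of sections of $M$, hence nonnegative at every point of $X(R)$, and in particular on $S$. Part~(a) gives $s\mid a_i^2$. This is a local condition, so in an affine open over which $L$ and $M$ are trivial it reads $s_0\mid a_{i,0}^2$ in a ring in which the ideal $(s_0)$ is radical (since $Z(s)$ is reduced). Consequently $a_{i,0}\in\sqrt{(s_0)}=(s_0)$, so $s$ divides $a_i$ locally, and globally by Lemma~\ref{kuerzen}(a).

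The key difficulty is in part~(a), in passing from ``$a$ vanishes pointwise on a Zariski dense subset of $R$-points of $Z(s)$'' to the scheme-theoretic statement ``$a|_{Z(s)}=0$.'' This is precisely where the reducedness assumption on the section $s$ (together with weak compatibility, which supplies enough real points) is indispensable: a section of an invertible sheaf on a reduced scheme vanishes identically as soon as its zero locus has full support.
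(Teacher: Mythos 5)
Your proof is correct, and both parts track the paper's overall strategy (use nonnegativity to force each summand of $a+b$ to vanish along $Z(s)$, then exploit reducedness; deduce (b) from (a) via radicality of the local ideal $(s_0)$). The implementation of part (a) is, however, genuinely different in technique: the paper localizes at a closed point, writes $(f)=\p_1\cap\cdots\cap\p_r$, and translates weak compatibility into the existence of orderings $\alpha_i\in\wt S\cap\Sper(\scrO_{X,x})$ with $\supp(\alpha_i)=\p_i$; the inequalities $g(\alpha_i)\ge0$, $h(\alpha_i)\ge0$ together with $(g+h)(\alpha_i)=0$ then give $g,h\in\p_i$. You instead stay entirely with $R$-points: $a+b$ restricts to zero on $Z(s)$, nonnegativity on $S$ forces $a$ and $b$ to vanish at every point of $S\cap Z(s)(R)$, the zero locus of $a|_{Z(s)}$ is Zariski closed and contains this Zariski dense set, hence has full support, and reducedness of $Z(s)$ upgrades this to $a|_{Z(s)}=0$, i.e.\ $s\mid a$ locally and then globally by Lemma \ref{kuerzen}(a). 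Your route is more elementary in that it avoids the real spectrum altogether (in particular it does not need the passage from nonnegativity on $S$ to nonnegativity on $\wt S$, nor the Artin--Lang-type step producing orderings supported at the minimal primes), at the modest cost of invoking the scheme-theoretic zero subscheme of $a|_{Z(s)}$ and the fact that a section of an invertible sheaf on a reduced scheme vanishing at every point is zero; the paper's real-spectrum formulation is the one that meshes with how $\wt S$ and preorderings are used later in Section \ref{secppss}, but for this lemma both arguments are complete. (Only cosmetic quibble: the pointwise vanishing statements take place in the fibers $M^{\otimes2}\otimes k(\xi)$ rather than in the stalks $(M^{\otimes2})_\xi$.)
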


\begin{proof}
We may argue locally.
Let $x$ be a closed point of $X$ and let $A=\scrO_{X,x}$ be the local
ring of $X$ at $x$. By fixing trivializations of $L$ and $M$ at $x$
we may identify the values of local sections of these sheaves at the
stalk $x$ with elements of $A$. Let $f\in A$ correspond to $s$.
The assumption on $s$ means that $(f)=\p_1\cap\cdots\cap\p_r$ with
prime ideals $\p_i$ of $A$, and that for
$i=1,\dots,r$ there exists $\alpha_i\in\wt S\cap\Sper(A)$ with $\supp
(\alpha_i)=\p_i$.

For (a) let $g$, $h\in A$ correspond to $a$ resp.\ $b$. By
assumption, $a$ and $b$ are nonnegative on $\wt S$. For every $i=1,
\dots,r$ we have $g+h\in\p_i$, that is, $(g+h)(\alpha_i)=0$, and on
the other hand $g(\alpha_i)\ge0$ and $h(\alpha_i)\ge0$. Together this
implies $g$, $h\in\p_i$, and hence $g$, $h\in(f)$. For (b) let $g_j
\in A$ correspond to $a_j$. From (a) it follows that $s$ divides $a_j
^2$, whencee $g_j^2\in(f)$, for every $j$. Since $(f)=\sqrt{(f)}$ we
conclude $g_j\in(f)$.
\end{proof}


\section{Existence of totally real sections}\label{secexcompsec}%

The main result of this section is Theorem \ref{extotreschnitt},
which is an existence theorem for totally real global sections of
invertible sheaves on quasi-projective real varieties. Throughout
this section let $R$ denote a real closed base field.

The following lemma is an application of the (\sa) theorem on
implicit functions:

\begin{lem}\label{implicfctappl}%
Let $X$ be a reduced $R$-scheme of finite type and let $f\colon X\to
\A^m$ be an $R$-morphism. For $0\ne a\in R^m$ let $X_a\subset X$ be
the (scheme-theoretic) preimage under $f$ of the linear hyperplane
$\sum_{i=1}^ma_ix_i=0$ in $\A^m$. Let $0\ne a\in R^m$ be such that
$X_a$ contains no irreducible component of $X$, and assume that $X_a$
has a regular $R$-point $\xi$. Given an open neighborhood $U$ of
$\xi$ in $X(R)$, we have $U\cap(X_b)_\reg(R)\ne\emptyset$ for all
$b\in R^m$ close to $a$.
\end{lem}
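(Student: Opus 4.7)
The plan is to apply the \sa\ implicit function theorem with parameter. Write $h_b:=\sum_{i=1}^m b_i(x_i\comp f)\in\scrO_X(X)$ for $b\in R^m$, so that $X_b$ is scheme-theoretically the zero locus of $h_b$ on $X$; set $h:=h_a$. I would first verify that $X$ itself is regular at $\xi$. Since $X_a$ contains no irreducible component of $X$ and $X$ is reduced, $h$ lies in no minimal prime of $A:=\scrO_{X,\xi}$ and is therefore a non-zero-divisor in $A$. By hypothesis $A/(h)=\scrO_{X_a,\xi}$ is regular of dimension $\dim A-1$; lifting a regular system of parameters of $A/(h)$ to $A$ and prepending $h$ produces a system of $\dim A$ generators of the maximal ideal of $A$, so $A$ is regular. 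Hence there is an affine open neighborhood $V$ of $\xi$ in $X$ with $V(R)\subset U$ and $V\subset X_\reg$ of pure dimension $d:=\dim_\xi X$.

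Next I would set up local coordinates. The regularity of $X_a$ at $\xi$ is equivalent to $dh(\xi)\ne0$ in the cotangent space of $X$ at $\xi$. Embedding $V$ as a closed subscheme of some affine space and invoking the \sa\ implicit function theorem (\cite{BCR}, Ch.~2), we obtain \sa\ local coordinates $u_1,\dots,u_d$ on a \sa\ open neighborhood $W\subset V(R)$ of $\xi$ such that $\partial h/\partial u_1(\xi)\ne0$. Shrinking $W$ to a smaller closed box around $\xi$, we may assume that $|\partial h/\partial u_1|$ is bounded below by a positive constant on $W$.

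Finally I would perturb. For $b\in R^m$ close to $a$, the function $h_b$ is close to $h$ in $C^1$ on the compact set $W$, so $\partial h_b/\partial u_1$ remains nonzero on $W$ while $h_b(\xi)$ is small. The \sa\ implicit function theorem with parameter $b$ then produces, for all $b$ in a small \sa\ neighborhood of $a$, a point $\eta\in W$ with $h_b(\eta)=0$ and $dh_b(\eta)\ne0$. Since $\scrO_{X,\eta}$ is regular and $h_b$ lies in its maximal ideal but not in its square, $\scrO_{X_b,\eta}=\scrO_{X,\eta}/(h_b)$ is regular; hence $\eta\in(X_b)_\reg(R)\cap U$, proving the lemma. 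The only delicate point is invoking the correct parameter version of the \sa\ implicit function theorem, which is standard.
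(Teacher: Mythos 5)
Your argument is correct and follows essentially the same route as the paper: both come down to applying the semialgebraic implicit function theorem to the family $h_b=\sum_i b_i f_i$ near $(\xi,a)$ — the paper via the incidence scheme $W\subset X\times\A^m$ and its projection to $\A^m$, you in local Nash coordinates on $X_\reg(R)$ — and then observing that $h_b(\eta)=0$ with $d h_b(\eta)\ne0$ at a regular point $\eta$ of $X$ forces $\eta\in(X_b)_\reg(R)\cap U$. Your explicit check that $\xi$ is in fact a regular point of $X$ fills in a step the paper only asserts.
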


\begin{proof}
Let $W\subset X\times\A^m$ be the closed subscheme whose $E$-valued
points are the pairs $(\eta,b)\in X(E)\times E^m$ with $\sum_ib_if_i
(\eta)=0$, for $E$ an $R$-algebra. The point $\xi$ is a regular point
of $X$,
and the tangent space to $W$ at $(\xi,a)$ consists of the pairs
$(u,v)\in T_\xi(X)\oplus R^m$ for which $\bil{d_\xi f_a}u+\bil
{f(\xi)}v=0$. Here $d_\xi f_a\in T_\xi(X)^*$ denotes the differential
of $f_a:=\sum_ia_if_i$ at $\xi$.
Since $\xi$ is a regular point of $X_a$ (and $X_a$ doesn't contain a
Zariski neighborhood of $\xi$) we have $d_\xi f_a\ne0$, and so $(\xi,
a)$ is a regular point of $W$. The projection $\pi\colon W\to\A^m$ is
submersive at $(\xi,a)$.
By the (semi-algebraic) theorem on implicit functions (\cite{BCR}
Cor.\ 2.9.8) there is a local continuous (semi-algebraic) section
$b\mapsto(\eta(b),b)$ of $\pi$ around $a$ with $\eta(a)=\xi$. So
$\eta(b)$ is an $R$-point of $X_b$ and is, by continuity, a regular
point of $X_b$, for $b$ close to $a$.
\end{proof}

\begin{lem}\label{transvsectoff}%
Let $X$ be a reduced $R$-scheme of finite type, let $L$ be an
invertible sheaf on $X$, and let $V\subset H^0(X,L)$ be a
finite-dimensional linear subspace. Moreover let open \sa\ subsets
$U_1,\dots,U_r$ of $X(R)$ be given. Then the set of all regular
sections $s\in V$ with $Z(s)_\reg\cap U_i\ne\emptyset$ for $i=1,
\dots,r$ is open in $V$ (in the euclidean topology).
\end{lem}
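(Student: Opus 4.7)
The plan is to show that the set in the lemma is open at each of its points. Fix $s_0$ in this set, so $s_0$ is regular and $Z(s_0)_\reg \cap U_i \ne \emptyset$ for every $i = 1, \dots, r$. It then suffices to verify two open conditions near $s_0$: (a) the regularity of $s \in V$, and (b) for each $i$, the existence of a regular $R$-point of $Z(s)$ inside $U_i$. Intersecting the resulting $r+1$ neighborhoods of $s_0$ in $V$ then produces a neighborhood contained in the set of the lemma.

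For (a): Let $Z_1, \dots, Z_m$ be the (finitely many) irreducible components of $X$. Since $X$ is reduced, a section $s \in V$ is regular iff $s$ does not restrict to the zero section on any $Z_j$. The subspace $V_j := \ker\bigl(V \to H^0(Z_j, L|_{Z_j})\bigr)$ is linear, hence closed in the euclidean topology on the finite-dimensional space $V$, so the set of regular sections is the open complement of $V_1 \cup \cdots \cup V_m$.

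For (b): Fix $i$ and pick $\xi \in Z(s_0)_\reg \cap U_i$. Choose an affine open neighborhood $W$ of $\xi$ in $X$ on which $L|_W$ is trivial, with trivializing section $e \in H^0(W, L)$, and fix a basis $t_1, \dots, t_d$ of $V$; write $t_j|_W = f_j \cdot e$ with $f_j \in \scrO_X(W)$. The tuple $(f_1, \dots, f_d)$ defines a morphism $\phi \colon W \to \A^d$, and for $s = \sum_j a_j t_j$ with $a \in R^d$ the scheme $W_a$ appearing in Lemma \ref{implicfctappl} (applied to $W$ and $\phi$ in the roles of $X$ and $f$ there) coincides with $Z(s) \cap W$. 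Writing $s_0 = \sum_j a_j^{(0)} t_j$, the regularity of $s_0$ forces $W_{a^{(0)}}$ to contain no irreducible component of $W$, and by choice $\xi$ is a regular $R$-point of $W_{a^{(0)}}$. Lemma \ref{implicfctappl} applied with the open neighborhood $U_i \cap W(R)$ of $\xi$ then produces, for every $a$ close to $a^{(0)}$, a regular $R$-point of $W_a = Z(s) \cap W$ lying in $U_i$. Since the regular locus of a scheme respects open immersions, this is a point of $Z(s)_\reg \cap U_i$, establishing (b).

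The main subtlety is to reconcile the global regularity condition on sections $s \in H^0(X, L)$ with the local setting of Lemma \ref{implicfctappl} on the affine scheme $W$. This is handled at once by the observation that the irreducible components of $W$ are precisely the nonempty intersections $Z_j \cap W$, so that global regularity of $s_0$ on $X$ directly implies the hypothesis that $W_{a^{(0)}}$ contains no irreducible component of $W$; no further significant obstacle arises.
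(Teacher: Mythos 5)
Your proof is correct and follows essentially the same route as the paper: the paper's own argument consists precisely of noting that regularity is an open condition and then invoking Lemma \ref{implicfctappl} for the second condition. You merely make explicit two details the paper leaves implicit, namely the linear-algebra argument for openness of regularity and the local trivialization of $L$ needed to place oneself in the affine setting of Lemma \ref{implicfctappl}, and both are handled correctly.
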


\begin{proof}
Being a regular global section is an open condition on $s$. Given
$s\in V$ with $Z(s)_\reg\cap U_i\ne\emptyset$ for $i=1,\dots,r$, it
follows from Lemma \ref{implicfctappl} that any $t\in V$ sufficiently
close to $s$ satisfies $Z(t)_\reg\cap U_i\ne\emptyset$ for $i=1,
\dots,r$ as well.
\end{proof}

\begin{lem}\label{offmengetransvreg}%
Let $k$ be a field and let $X$ be a reduced quasi-projective
$k$-scheme without $0$-dimensional irreducible components. Let $L$,
$M$ be invertible sheaves on $X$, with $M$ ample. Let finitely many
closed points $\xi_1,\dots,\xi_r$ and $\eta_1,\dots,\eta_p$ in $X$ be
given, with $\xi_i\ne\eta_j$ for all $i,j$, and assume that $\xi_1,
\dots,\xi_r$ are regular points of $X$. There is an integer $n_0\ge0$
such that, for any $n\ge n_0$, the sheaf $L\otimes M^{\otimes n}$ has
a regular global section $s$ for which $\xi_1,\dots,\xi_r$ are
regular points of $Z(s)$ and $\eta_1,\dots,\eta_p\notin Z(s)$.
\end{lem}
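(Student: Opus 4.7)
The plan is to apply a Serre-vanishing / interpolation argument. First I would note that since each $\xi_i$ is a regular point of $X$ and no irreducible component of $X$ is zero-dimensional, the cotangent space $\mathfrak{m}_{\xi_i}/\mathfrak{m}_{\xi_i}^2$ of $\scrO_{X,\xi_i}$ is nonzero. Hence, after trivializing $L\otimes M^{\otimes n}$ locally at $\xi_i$, any section whose germ there has nonzero image in $\mathfrak{m}_{\xi_i}/\mathfrak{m}_{\xi_i}^2$ will have regular zero scheme at $\xi_i$, since the defining element extends to a regular system of parameters of the regular local ring. Thus the whole statement will reduce to producing, for $n$ large, a section $s$ of $L\otimes M^{\otimes n}$ with prescribed nonzero linear part at each $\xi_i$, prescribed nonzero value at each $\eta_j$, and which is moreover regular as a global section on $X$.

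To ensure regularity of $s$ itself, I would enlarge the list of constraint points: for every irreducible component $Y$ of $X$ disjoint from $\{\xi_1,\dots,\xi_r,\eta_1,\dots,\eta_p\}$, pick a closed point $\zeta_Y\in Y$, which is possible because $\dim Y\ge1$, and treat it as an additional $\eta_j$. Since $X$ is reduced, any section that is nonzero at some point of every irreducible component of $X$ is automatically a regular section. I would then form the coherent ideal sheaf $I=\prod_i \mathfrak{m}_{\xi_i}^2\cdot\prod_j\mathfrak{m}_{\eta_j}$ (product over the enlarged list) and its zero-dimensional closed subscheme $Z=V(I)$, and reduce the task to showing that the evaluation map
\[
\mathrm{ev}_n\colon H^0(X,L\otimes M^{\otimes n})\;\longrightarrow\; H^0\bigl(Z,(L\otimes M^{\otimes n})|_Z\bigr)
\]
is surjective for all $n\ge n_0$. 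Any preimage of an element of the target whose component at each $\xi_i$ is a nonzero linear form and whose component at each $\eta_j$ is nonzero will then be the desired $s$.

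Surjectivity of $\mathrm{ev}_n$ is the heart of the argument. In the projective case it is textbook Serre vanishing, using $H^1(X,I\cdot L\otimes M^{\otimes n})=0$ for $n\gg0$ and the short exact sequence
\[
0\to I\cdot L\otimes M^{\otimes n}\to L\otimes M^{\otimes n}\to (L\otimes M^{\otimes n})|_Z\to 0.
\]
The hard part will be extending this to the quasi-projective setting. I would handle it by compactifying: embed $X\hookrightarrow\bar X$ with $\bar X$ a projective $k$-scheme, extend $L$ and $I\cdot L$ to coherent sheaves on $\bar X$, and fix an ample invertible sheaf $\bar M$ on $\bar X$ such that $\bar M^{\otimes d}|_X\cong M^{\otimes e}$ for some positive integers $d,e$ (obtainable from the quasi-projective embedding furnished by a suitable power of $M$). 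Serre vanishing on $\bar X$ will then yield surjectivity of the analogous evaluation map on $\bar X$; since $Z\subset X$, restricting sections back to $X$ will give surjectivity of $\mathrm{ev}_n$, initially along an arithmetic progression of exponents and then, by a routine tensoring argument, for all $n\ge n_0$.

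The main technical obstacle is thus the compactification and coherent-extension step, which is needed precisely because $X$ is only quasi-projective. Once $\mathrm{ev}_n$ has been shown surjective, the remaining verifications are immediate from the explicit choice of preimage: $Z(s)$ is regular at each $\xi_i$ thanks to the nonzero linear part there, no $\eta_j$ lies in $Z(s)$, and $s$ is a regular section of $L\otimes M^{\otimes n}$ because it is nonzero at some point of every irreducible component of $X$.
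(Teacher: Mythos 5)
Your proposal is correct in substance and follows essentially the same route as the paper's proof: both reduce the lemma to surjectivity, for $n\gg0$, of the restriction map from $H^0(X,L\otimes M^{\otimes n})$ to the zero-dimensional subscheme with structure $\scrO_{X,\xi_i}/\m_{X,\xi_i}^2$ at the $\xi_i$ and $k(\eta_j)$ at the $\eta_j$, obtained by extending the sheaf coherently to a projective closure and invoking Serre vanishing, with the ample-versus-very-ample (resp.\ compactification) mismatch handled by the same arithmetic-progression twisting trick. The one inaccuracy is in your final regularity argument: you add auxiliary points only to components disjoint from $\{\xi_1,\dots,\xi_r,\eta_1,\dots,\eta_p\}$, so a component whose only marked points are some of the $\xi_i$ carries no point at which $s$ is required to be nonzero, and your closing claim that $s$ is nonzero somewhere on every irreducible component is not guaranteed by your construction. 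This is harmless and fixable in either of two ways: add an auxiliary point (distinct from the $\xi_i$, possible since the component is positive-dimensional) to every component containing no $\eta_j$, which is what the paper does; or note that each $\xi_i$, being a regular point of the reduced scheme $X$, lies on a unique irreducible component, and the prescribed nonzero class in $\m_{X,\xi_i}/\m_{X,\xi_i}^2$ already forces $s$ not to vanish identically on that component.
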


\begin{proof}
Assume the lemma is shown in the case where $M$ is very ample. In the
general case there is $d\ge1$ such that $M^{\otimes d}$ is very
ample. For $i=1,\dots,d$ there exists, by the assumption, an integer
$n_i\in\N$ such that, for $n\ge n_i$, $L\otimes M^{\otimes(i+dn)}$
has a regular section as required. Therefore, if $n\ge\max\{i+dn_i
\colon i=1,\dots,d\}$, then $L\otimes M^{\otimes n}$ has a regular
section as required.

So we may assume that $X\subset\P^m$ is locally closed and $M=\scrO_X
(1)$. Let $\ol X\subset\P^m$ be the reduced closure of $X$. There
exists a coherent sheaf $L'$ on $\ol X$ for which $L'|_X\cong L$
(\cite{H} exercise II.5.15).
By adding more points to the sequence $\eta_1,\dots,\eta_p$ if
necessary we may assume that every irreducible component of $X$
contains one of the $\eta_j$.
Consider the $0$-dimensional closed subscheme
$$Y\>:=\>\coprod_{i=1}^r\Spec(\scrO_{X,\xi_i}/\m_{X,\xi_i}^2)\>\amalg
\>\coprod_{j=1}^p\Spec k(\eta_j)$$
of $X$, where $k(\eta_j)=\scrO_{X,\eta_j}/\m_{X,\eta_j}$ denotes the
residue field of $\eta_j$,
and let $i\colon Y\into\ol X$ be the inclusion. We have the exact
sequence
$$0\to F\to L'\to i_*i^*L'\to0$$
of coherent sheaves on $\ol X$.
There exists $n_0\ge0$ such that $H^1(\ol X,F(n))=0$ for all $n\ge
n_0$ (\cite{H} III.5.2). Hence the restriction map
$$H^0(\ol X,L'(n))\to H^0(Y,L'(n))$$
is surjective for $n\ge n_0$. In particular, for every $n\ge n_0$,
there is a section $s\in H^0(\ol X,L'(n))$ that vanishes in each of
$\xi_1,\dots,\xi_r$, but only of first order, and that does not
vanish in any of $\eta_1,\dots,\eta_p$. It is clear that $s|_X$ is a
regular global section of $L(n)$,
and that $\xi_1,\dots,\xi_r$ are nonsingular points of $Z(s|_X)$.
\end{proof}

We recall the following consequences of Bertini's theorems.

\begin{prop}\label{berti}%
Let $k$ be a field of characteristic zero, and let $X$ be a reduced
and locally closed $k$-subscheme of $\P^n$.
\begin{itemize}
\item[(a)]
For almost all $k$-hyperplanes $H$ the intersection $H\cap X$ is
reduced.
\item[(b)]
If $X$ is geometrically irreducible and $\dim(X)\ge2$, then for
almost all $k$-hyperplanes $H$ the intersection $H\cap X$ is
irreducible.
\end{itemize}
\end{prop}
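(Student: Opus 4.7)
The plan is to combine the classical Bertini smoothness theorem with a dimension count. First I would reduce to the case where $X$ is integral: write $X=\bigcup_i X_i$ as the finite union of its integral components (possible since $X$ is reduced), observe that scheme-theoretic intersection with $H$ distributes over this union, and note that a finite union of reduced subschemes of the same pure codimension is reduced. For $X$ integral of dimension $d$, the regular locus $X_\reg$ is open and dense, and the classical Bertini smoothness theorem (valid in characteristic zero) produces a dense open $U\subset(\P^n)^\vee$ such that $H\cap X_\reg$ is smooth, hence reduced, for every $H\in U$. A further dimension count shrinks $U$ so that $\dim\bigl(H\cap(X\setminus X_\reg)\bigr)\le d-2$ for $H\in U$, whence all generic points of $H\cap X$ lie in $H\cap X_\reg$; this gives generic reducedness of $H\cap X$. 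To upgrade to full reducedness I would observe that $H$ is locally cut out by a single non-zerodivisor on $X$, so $H\cap X$ has no embedded associated primes (unmixedness), and unmixedness together with generic reducedness forces reducedness.

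\textbf{Part (b).} My plan is the standard incidence-variety argument. I would set
\[
I \>=\> \bigl\{(x,H)\in X\times(\P^n)^\vee:x\in H\bigr\}
\]
with the two projections $p\colon I\to X$ and $q\colon I\to(\P^n)^\vee$. Since $p$ realizes $I$ as a projective bundle with fibre $\P^{n-1}$ over the geometrically irreducible $X$, $I$ is itself geometrically irreducible. It then suffices to show that the geometric generic fibre of $q$ is irreducible, because the locus of $H$ with geometrically irreducible fibre is then constructible and dense in $(\P^n)^\vee$, hence contains a dense open subset; that subset is automatically defined over $k$ and, since $k$ is infinite (char.\ zero), contains many $k$-rational hyperplanes. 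The hypothesis $\dim X\ge2$ enters essentially by guaranteeing that the generic fibre of $q$ has positive dimension, after which a Stein factorization / Zariski connectedness argument applied to $q$ shows that the geometric generic fibre is connected; combining with (a), which makes the generic fibre reduced so that connected components and irreducible components coincide, yields geometric irreducibility.

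\textbf{Obstacles and references.} The main obstacle in (a) is the passage from generic reducedness to global reducedness, which rests on the unmixedness of $H\cap X$ along $H\cap X_{\mathrm{sing}}$. For (b), the delicate step is the connectedness of the geometric generic fibre of $q$, where the dimension hypothesis $\dim X\ge 2$ is crucial (the statement is false for $\dim X=1$). Since the proposition is entirely classical and the paper says ``we recall'' these facts, in the actual write-up I would simply quote them from a standard reference such as Jouanolou's \emph{Th\'eor\`emes de Bertini et applications} rather than reconstruct the proofs in detail.
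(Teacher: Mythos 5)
Your bottom line is the same as the paper's: the paper's entire proof of Proposition \ref{berti} is the citation ``See \cite{J} Cor.\ 6.11'', so quoting Jouanolou, as you say you would do in the write-up, is exactly the intended argument. However, the sketch you offer in support contains genuine gaps, so it should not be presented as a proof.

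In (a), two steps fail. First, scheme-theoretic intersection with $H$ does not distribute over the decomposition into components: $(I\cap J)+(\ell)\subsetneq(I+(\ell))\cap(J+(\ell))$ in general, so reducedness of each $H\cap X_i$ says nothing at points where components meet, which is precisely where the difficulty sits. Second, and more seriously, the claim that cutting by a single non-zerodivisor preserves the absence of embedded primes is false: a reduced ring is only $S_1$, and a hypersurface section can destroy $S_1$. For example, for the reduced union of two planes $V(x,y)\cup V(z,w)$ in $\A^4$, the non-zerodivisor $x-z$ cuts out the ideal $(x^2,xy,xw,yw)$ in $k[x,y,w]$, which has an embedded prime at the origin. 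Unmixedness of $H\cap X$ for generic $H$ is thus itself a genericity statement (this is where the real content of the ``reduced'' Bertini theorem lies), not a formal consequence of $H$ being locally cut out by one equation. In (b), the final inference ``geometric generic fibre connected and reduced, hence irreducible'' is also false: a connected reduced scheme can be reducible (e.g.\ two conjugate components of $H\cap X$ meeting in a point of $H\cap X_{\mathrm{sing}}$, a configuration that Bertini smoothness on $X_\reg$ does not exclude). What one actually has to show is that $k\bigl((\P^n)^\vee\bigr)$ is algebraically closed in $k(I)$, which is the substance of the classical argument in \cite{J}; connectedness of the generic fibre together with part (a) does not yield it. So either keep the proof as the bare citation, as the paper does, or repair these two steps before presenting the sketch as an argument.
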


Here, as usual, ``for almost all $k$-hyperplanes $H$'' means: For
all $k$-hyperplanes that lie in a non-empty open subset of the
dual projective space $(\P^n_k)^*$.

\begin{proof}
See \cite{J} Cor.\ 6.11.
\end{proof}

\begin{thm}\label{extotreschnitt}%
Let $R$ be a real closed field and $X$ a reduced quasi-projective
$R$-scheme with $\dim(X')\ge2$ for every irreducible component $X'$
of $X$. Let $S\subset X(R)$ be a \sa\ set that is Zariski dense
in~$X$.
\begin{itemize}
\item[(a)]
If $L$, $M$ are invertible sheaves on $X$ and $M$ is ample, there
exists $n_0\ge0$ such that, for every $n\ge n_0$, $L\otimes M^
{\otimes n}$ has a regular and reduced global section $s$ that is
weakly compatible with $S$.
\item[(b)]
If $X$ is irreducible, any very ample invertible sheaf on $X$ has a
nonzero reduced global section that is weakly compatible with $S$.
\end{itemize}
\end{thm}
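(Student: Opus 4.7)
The plan combines three tools built up above: Bertini's theorems (Proposition \ref{berti}), the openness Lemma \ref{transvsectoff}, and the existence Lemma \ref{offmengetransvreg}. Working in the $R$-vector space $V_n:=H^0(X,L\otimes M^{\otimes n})$, I will construct a nonempty Zariski open $W\subset V_n$ (coming from Bertini: $Z(s)$ reduced and each $Z(s)\cap X_i$ irreducible) together with a nonempty euclidean open $\Omega\subset V_n$ (coming from Lemma \ref{transvsectoff}: $Z(s)$ meets nonsingularly a chosen open piece of $S$ inside each component $X_i$). Since $V_n$ is an affine $R$-space, $W(R)$ is euclidean dense in $V_n$, so $\Omega\cap W(R)\ne\emptyset$ yields a section with all required properties.

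For the setup, I first note that $S$ being Zariski dense in $X$ forces $S\cap X_i(R)$ to be Zariski dense in every irreducible component $X_i$: the Zariski closure of $S$ is the union of the Zariski closures of the $S\cap X_j(R)$, and irreducibility of $X_i$ forces $X_i$ to coincide with one such closure, which by minimality of irreducible components must be $\ol{S\cap X_i(R)}$. Proposition \ref{artinlang} then supplies a nonempty euclidean open $U_i\subset S\cap (X_i)_\reg(R)$, which I shrink so that $U_i$ avoids every other component of $X$. In particular $X_i$ has a nonsingular $R$-point, whence $X_i$ is geometrically irreducible (otherwise the Galois-conjugate components of $X_i\otimes_R\ol R$ would meet only in a lower-dimensional subset containing all real points of $X_i$, contradicting Zariski density). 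Pick $\xi_i\in U_i$, and reduce to the case where $M$ is very ample by the tensor-power trick in the proof of Lemma \ref{offmengetransvreg}.

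For $n$ sufficiently large, $L\otimes M^{\otimes n}$ is very ample and gives a locally closed embedding of $X$ into some $\P^N$ in which sections $s\in V_n$ correspond to hyperplane sections of $X$. Bertini \ref{berti}(a) then produces a nonempty Zariski open $W_a\subset V_n$ on which $Z(s)$ is reduced, and Bertini \ref{berti}(b) applied to each geometrically irreducible $X_i$ (of dimension $\ge2$) produces $W_i\subset V_n$ on which $Z(s)\cap X_i$ is irreducible; set $W:=W_a\cap\bigcap_i W_i$. Independently, Lemma \ref{offmengetransvreg} gives a regular $s_0\in V_n$ (for $n$ large) with each $\xi_i$ a nonsingular point of $Z(s_0)$, and Lemma \ref{transvsectoff} shows that the set $\Omega$ of regular $s\in V_n$ with $Z(s)_\reg\cap U_i\ne\emptyset$ for every $i$ is a nonempty euclidean open neighborhood of $s_0$.

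Since $W$ is a nonempty Zariski open subset of the affine $R$-space $V_n$, the complement has $R$-points of empty euclidean interior, so $\Omega\cap W(R)\ne\emptyset$. Any $s$ in this intersection satisfies: $Z(s)$ is reduced with irreducible components precisely the (distinct) $Z(s)\cap X_i$, and each such component contains a nonsingular point $\eta_i\in U_i\subset S$ together with a euclidean open neighborhood in $Z(s)_\reg(R)\cap S$; Proposition \ref{artinlang} then yields weak compatibility of each component with $S$, hence of $Z(s)$ itself. For part (b), the same argument runs directly with the embedding $X\hookrightarrow\P^N$ given by the very ample $M$, without any tensor power: no coordination across multiple components is needed, so one only has to intersect Bertini's Zariski open with the euclidean open of hyperplanes through a chosen $\xi\in S\cap X_\reg(R)$ transverse to $X$ at $\xi$. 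The main obstacle is ensuring that \emph{every} irreducible component of $Z(s)$ is weakly compatible with $S$, not merely some; this is what forces Bertini \ref{berti}(b)---irreducibility---to be invoked on each $X_i$ in addition to Bertini \ref{berti}(a)---reducedness.
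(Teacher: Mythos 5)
Your part (a) is essentially the paper's own argument: Artin--Lang (Proposition \ref{artinlang}) to get open sets $U_i\subset S$ in the regular locus of each component, geometric irreducibility of $X_i$ from its regular $R$-point, reduction to very ample $M$, Lemma \ref{offmengetransvreg} to produce a section having the chosen $\xi_i$ as nonsingular zeros, Lemma \ref{transvsectoff} for the euclidean-open condition, Bertini (Proposition \ref{berti}) for generic reducedness and irreducibility of the components of the zero scheme, and euclidean density of a nonempty Zariski open set to intersect the two loci. One technical point you gloss over: $X$ is only quasi-projective, so $V_n=H^0(X,L\otimes M^{\otimes n})$ need not be finite-dimensional; ``nonempty Zariski open subset of the affine $R$-space $V_n$'' and ``sections of $V_n$ correspond to hyperplane sections'' are therefore not literally available. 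The paper repairs this by fixing a finite-dimensional subspace $V\subset H^0(X,L(n))$ which contains the section furnished by Lemma \ref{offmengetransvreg} and for which $(L(n),V)$ defines a locally closed embedding, and by running Bertini and Lemma \ref{transvsectoff} inside that $V$. This is a routine adjustment rather than a missing idea.

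Part (b) as you state it has a genuine gap. The hyperplanes passing through a \emph{fixed} $\xi\in S\cap X_\reg(R)$ and transverse to $X$ at $\xi$ all lie in the dual hyperplane $\xi^\perp\subset(\P^N)^*$, so they do not form a euclidean open subset of $(\P^N)^*(R)$; that set has empty interior. Consequently you cannot conclude that it meets Bertini's nonempty Zariski open $W$ by a density argument: the closed complement of $W$ could contain all of $\xi^\perp$, and Proposition \ref{berti} gives no control over hyperplanes through a prescribed base point. The correct move --- which is what the paper does, and what your own part (a) already does via Lemma \ref{transvsectoff} --- is to let the point of transversality vary: take a nonempty euclidean open $U\subset S\cap X_\reg(R)$ and consider the set of hyperplanes $H$ with $(H\cap X)_\reg(R)\cap U\ne\emptyset$; by Lemma \ref{implicfctappl}/\ref{transvsectoff} this is a nonempty euclidean open subset of $(\P^N)^*(R)$, hence it meets the Zariski open set of hyperplanes with $H\cap X$ reduced and irreducible, and any hyperplane in the intersection gives the desired section.
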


\begin{proof}
(a)
As in the proof of Lemma \ref{offmengetransvreg}, we can assume that
$M$ is very ample. So we may assume that $X\subset\P^n$ is locally
closed and $M=\scrO_X(1)$. Let $X_1,\dots,X_r$ be the irreducible
components of $X$. Since $S$ is Zariski dense in $X$, there exists.
for each $i=1,\dots,r$, a non-empty open set $U_i\subset X_\reg(R)
\cap X_i(R)$ contained in $S$ (Proposition \ref{artinlang}). For
$i=1,\dots,r$ fix a point $\xi_i\in U_i$. Choose $n_0\ge0$ so large
that the conclusion of Lemma \ref{offmengetransvreg} holds for
$\xi_1,\dots,\xi_r$, and that $L(n)$ is very ample for all $n\ge
n_0$. Fix $n\ge n_0$ and a regular section $s\in H^0(X,L(n))$ for
which $\xi_1,\dots,\xi_r$ are regular points of $Z(s)$. Choose a
finite-dimensional linear subspace $V\subset H^0(X,L(n))$ containing
$s$ such that the pair $(L(n),V)$
defines a locally closed embedding $X\into\P^{\dim(V)-1}$. Inside $V$
there is an open neighborhood $\Omega$ of $s$ (with respect to the
euclidean topology) consisting of regular sections $t$ satisfying
$Z(t)_\reg\cap U_i\ne\emptyset$ for $i=1,\dots,r$, by Lemma
\ref{transvsectoff}. On the other hand, for generic $t\in V$,
Bertini's theorem \ref{berti} tells us that the scheme $Z(t)$ is
reduced and $Z(t)\cap X_i$ is irreducible for $i=1,\dots,r$.
Indeed, $\dim(X_i)\ge2$ by assumption, and $X_i$ is geometrically
irreducible since $X_i$ has a regular $R$-point.
In particular, there exists such $t$ in $\Omega$. For any such $t$,
the reduced closed subscheme $Z(t)$ of $X$ has precisely $r$
different irreducible components $Z_1,\dots,Z_r$, for which $Z_i
\subset X_i$ and $(Z_i)_\reg\cap U_i\ne\emptyset$ ($i=1,\dots,r$).
Hence $Z_i(R)\cap S$ is Zariski dense in $Z_i$ for each $i$ (see
\ref{artinlang}), and so $t$ is weakly compatible with $S$ (see
\ref{dfndivwc}).

(b)
We can assume $X\subset\P^n$ and $L=\scrO_X(1)$. There is a non-empty
open subset $U$ of $X_\reg(R)$ contained in $S$. Any linear
hyperplane $H$ that intersects $X$ transversely in some point of $U$
and for which $H\cap X$ is reduced and irreducible corresponds to a
section of $L$ that is weakly compatible with $S$.
\end{proof}

\begin{rem}
If $X$ is reducible in Theorem \ref{extotreschnitt}, statement (b)
usually fails. As an illustration let $X=X_0\cup\cdots\cup X_n$ be a
union of isotropic quadrics $X_i$ in $\P^n_R$ with signature $(n,1)$
such that the sets $X_i(R)$ are pairwise disjoint and not nested. If
the diameters of these quadrics are sufficiently small, there is no
hyperplane meeting each $X_i$ in a real point.
\end{rem}

\begin{rem}\label{remextotreschnitt}%
In both parts of Theorem \ref{extotreschnitt}, we may require in
addition that $s$ does not vanish in a finite list of given closed
points of $X$. (See Lemma \ref{transvsectoff}.)
\end{rem}

\begin{cor}\label{verallgrogg}%
Let $X$ be a reduced quasi-projective $R$-scheme with $\dim(X')\ne1$
for every irreducible component $X'$ of $X$, and let $S\subset X(R)$
be a \sa\ subset that is Zariski dense in $X$.
\begin{itemize}
\item[(a)]
Any divisor on $X$ is linearly equivalent to a difference $D_1-D_2$,
where $D_1$, $D_2$ are reduced effective divisors that are weakly
compatible with $S$.
\item[(b)]
If $X$ is affine, any divisor is linearly equivalent to an effective
reduced divisor that is weakly compatible with $S$.
\end{itemize}
\end{cor}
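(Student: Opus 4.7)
My plan is to deduce both parts from Theorem \ref{extotreschnitt}(a), after a harmless reduction to the case where every irreducible component of $X$ has dimension $\ge2$. For the reduction I would split $X=X_+\sqcup X_0$, where $X_0$ is the union of the $0$-dimensional irreducible components and $X_+$ the union of the rest. Since every irreducible component is a maximal irreducible closed subset, a $0$-dimensional component $\{x\}$ cannot lie in any higher-dimensional one, so the two pieces are disjoint and clopen in $X$. On the reduced $0$-dimensional scheme $X_0$ every local ring is a field, so its sheaf of total quotient rings coincides with $\scrO_{X_0}$ and every Cartier divisor on $X_0$ is zero. Moreover, density of $S$ in $X$ transfers to density of $S\cap X_+(R)$ in $X_+$ because $X_+$ is clopen. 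Hence I may replace $X$ by $X_+$ and assume $\dim(X')\ge2$ for every irreducible component $X'$ of $X$.

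For (a), I would fix any ample invertible sheaf $M$ on the quasi-projective scheme $X$ (for instance $M=\scrO_{\P^n}(1)|_X$ for any locally closed embedding $X\hookrightarrow\P^n$) and apply Theorem \ref{extotreschnitt}(a) separately to $L=\scrO_X(D)$ and to $L=\scrO_X$. Choosing $n$ large enough for both applications, I obtain regular reduced global sections $s_1$ of $\scrO_X(D)\otimes M^{\otimes n}$ and $s_2$ of $M^{\otimes n}$, each weakly compatible with $S$. Setting $D_i:=Z(s_i)$ gives reduced effective divisors weakly compatible with $S$, and the line-bundle identity $\scrO_X(D_1-D_2)\cong\scrO_X(D_1)\otimes\scrO_X(D_2)^\du\cong\scrO_X(D)$ yields $D\sim D_1-D_2$.

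For (b), the trick is to pick a cleverly trivial ample sheaf. Assuming $X$ is affine, I would choose a closed embedding $X\hookrightarrow\A^N$ and compose with the standard open immersion $\A^N\hookrightarrow\P^N$ identifying $\A^N$ with the chart $\{x_0\ne0\}$. Setting $M:=\scrO_{\P^N}(1)|_X$ gives an ample invertible sheaf on $X$ (it is the pullback of a very ample sheaf along a locally closed embedding, so some power gives a locally closed embedding of $X$ into projective space). But the global section $x_0$ restricts to a nowhere vanishing section of $M$ on $X$, so $M\cong\scrO_X$ as invertible sheaves, and therefore $\scrO_X(D)\otimes M^{\otimes n}\cong\scrO_X(D)$ for every $n$. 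A single application of Theorem \ref{extotreschnitt}(a) then produces a regular reduced global section of $\scrO_X(D)$ itself that is weakly compatible with $S$; its zero scheme is the required $E$.

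The one real subtlety lies in part (b): one needs the existence theorem to output a section of $\scrO_X(D)$ rather than of some nontrivial twist, and this is arranged precisely by exploiting that on an affine variety the tautological ``$\scrO(1)$'' sheaf coming from an embedding into an affine chart of projective space is trivial as an abstract invertible sheaf.
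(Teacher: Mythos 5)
Your argument is correct and follows essentially the paper's own route: both parts are deduced from Theorem \ref{extotreschnitt}(a), applied twice for (a), while for (b) the point is that the ample twisting sheaf may be taken to be trivial on affine $X$ --- the paper simply takes $M=\scrO_X$ (which is ample since $X$ is affine) and the section $s=1$, which is what your embedding of $X$ into the chart $\{x_0\ne0\}\subset\P^N$ accomplishes a bit more indirectly. Your explicit splitting-off of the $0$-dimensional components (where every Cartier divisor is zero) is a detail the paper leaves implicit, and you handle it correctly.
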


\begin{proof}
Let $D$ be a divisor with associated invertible sheaf $\scrO_X(D)$,
and choose a very ample invertible sheaf $M$ on $X$. If $n>0$ is
large enough then $M^{\otimes n}$ and $\scrO_X(D)\otimes M^{\otimes
n}$ have regular and reduced global sections $s$ resp.\ $t$ that are
weakly compatible with $S$ (Theorem \ref{extotreschnitt}). Hence $D$
is linearly equivalent to $Z(t)-Z(s)$. If $X$ is affine we can take
$M=\scrO_X$ and $s=1$.
\end{proof}

\begin{rem}
Particular cases of Corollary \ref{verallgrogg} were proved by
Roggero \cite{Ro}. For $R=\R$ and $S=X(\R)$, she proved the case
where $X$ is normal of dimension $\ge2$, and either affine or
projective.
\end{rem}

\begin{rem}\label{curvrems1}%
The exclusion of one-dimensional irreducible components in these
results cannot be avoided. More precisely, the following is known.
Let $X$ be a connected nonsingular projective curve over $R$ with
$X(R)\ne\emptyset$. There exists an integer $n\ge0$ such that any
divisor $D$ on $X$ with $\deg(D)>n$ is linearly equivalent to an
effective totally real divisor (\cite{sch:tams} Cor.\ 2.10; see also
Monnier \cite{Mo1} Thm.\ 3.6 who proves $n\le2g-1$ for $M$-curves or
$(M-1)$-curves over $\R$). More generally, we have, for $R=\R$:
\end{rem}

\begin{prop}\label{curvecase}%
Let $X$ be a connected nonsingular projective curve over $\R$, and
let $S\subset X(\R)$ be an infinite \sa\ subset with $S\cap O\ne
\emptyset$ for every oval $O$ of $X(\R)$. There is an integer $n\ge0$
such that every invertible sheaf $L$ on $X$ with $\deg(L)\ge n$ has a
nonzero global section that is weakly compatible with~$S$.
\end{prop}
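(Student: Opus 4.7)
My approach is to show that for $d$ large enough, every invertible sheaf $L$ of degree $d$ admits an effective divisor $D\sim L$ supported entirely in $S$; any such $D$ is weakly compatible with $S$, and the associated section of $L$ is the one we seek.

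Since $S$ is an infinite semi-algebraic subset of the real $1$-manifold $X(\R)=O_1\sqcup\cdots\sqcup O_s$, it contains a non-degenerate open arc, which after relabelling I may assume to be an open interval $I\subset O_1$. For $i=2,\dots,s$ fix a point $Q_i\in S\cap O_i$. Recall that the connected components of $\mathrm{Pic}^d(X)(\R)$ are indexed by parity tuples $\epsilon=(\epsilon_1,\dots,\epsilon_s)\in(\mathbb{Z}/2)^s$ satisfying $\sum\epsilon_i\equiv d\pmod{2}$: here $\epsilon_i$ is the parity of the number of real points of any effective real divisor $D\sim L$ lying on $O_i$, equivalently the isomorphism class of $L|_{O_i}$ as a real line bundle on $O_i\cong S^1$. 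Given $L$ of degree $d$ with parity $\epsilon(L)=\epsilon$, set $k:=d-\sum_{i\geq2}\epsilon_i\geq d-s$. A direct parity calculation shows that $[L]-\sum_{i\geq2}\epsilon_i[Q_i]\in\mathrm{Pic}^k(X)(\R)$ has parity tuple $(\epsilon_1,0,\dots,0)$, and if I can exhibit an effective divisor $D_I\in I^{(k)}$ in this class, then $D:=D_I+\sum_{i\geq2}\epsilon_i Q_i$ is an effective divisor $\sim L$ supported in $S$.

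Everything therefore reduces to the following key lemma: there exists $k_0$ (depending only on $X$ and $I$) such that for every $k\geq k_0$, the Abel--Jacobi map $u_k\colon I^{(k)}\to\mathrm{Pic}^k(X)(\R)$ surjects onto the single component into which its image falls. Fix $p_0\in I$ and let $T:=\mathrm{Pic}^0(X)(\R)^{\circ}$, a compact connected real Lie group of dimension $g$. The translate $A_k-ku(p_0)$ of the image $A_k:=u_k(I^{(k)})$ is the $k$-fold Minkowski sum $kB\subset T$, where $B:=u(I)-u(p_0)$ is a semi-algebraic open arc through the origin. At a generic $(P_1,\dots,P_k)\in I^k$ with $k\geq g$, the differential of $u_k$ is surjective onto $T_0 T$: any non-zero real holomorphic differential has only finitely many real zeros, so $g$ generic points of $I$ fail to annihilate any such differential. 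Hence $gB$ contains a non-empty Euclidean open subset of $T$.

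The main obstacle is to upgrade ``$gB$ contains an open set'' to $kB=T$ for $k$ large. Pick a closed ball $V$ inside the interior of $gB$ and a point $c\in V^\circ$, so that $W:=V-c$ is a compact neighborhood of $0$ in $T$. Since $T$ is a compact connected abelian Lie group, the ascending union $\bigcup_{n}nW$ is an open subsemigroup, hence all of $T$; by compactness of $T$ already some finite $k'$ satisfies $k'W=T$, whence $k'V=T$. Because $V\subset gB$, this gives $gk'\cdot B\supseteq k'V=T$; and because $0\in B$ the sequence $(kB)_k$ is monotonically non-decreasing, so $kB=T$ for all $k\geq gk'$. Taking $k_0:=gk'$ and $n:=k_0+s$ then suffices: for every $L$ of degree $d\geq n$ the construction of the previous paragraph produces the desired divisor, completing the proof.
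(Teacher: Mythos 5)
Your argument is correct, and its skeleton coincides with the paper's: fix a point of $S$ on each oval, use the parity description of the connected components of the real Picard group (\cite{sch:tams} Lemma 2.6, which is also what the paper invokes) to correct the given class by $\sum_{i\ge2}\epsilon_iQ_i$ so that the remainder lies in the component met by sums of points of one oval, and then realize every class in that component by an effective divisor supported in $S$. The genuine difference is in the key filling step: the paper simply cites \cite{sch:tams} Lemma 2.12 (there is $m$ such that every $\alpha\in J(\R)_0$ equals $\sum_{j=1}^m[P_j-Q_1]$ with $P_j\in S$), whereas you prove the analogous statement from scratch, using that an infinite \sa\ subset of the curve contains an open arc $I$, that the Abel--Jacobi map on $I^g$ is submersive at suitable points (so the $g$-fold Minkowski sum of $B=u(I)$ has nonempty interior in $T=J(\R)_0$), and a compact-group semigroup argument to get $kB=T$ for all large $k$. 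This makes the proof self-contained and shows exactly where the archimedean hypothesis $R=\R$ enters (compactness of $J(\R)$), consistent with the failure over non-archimedean fields noted in Remark \ref{curvrems2}. Two steps are stated more tersely than they deserve, though both are routine: the claim that $g$ generic points of $I$ annihilate no nonzero holomorphic differential needs a short induction (choose $P_{j+1}$ avoiding the zero set of some nonzero differential vanishing at $P_1,\dots,P_j$, so the dimension of the space of such differentials drops each time), since one must avoid infinitely many differentials, not just one; and the assertion that the ascending union $\bigcup_n nW$ equals $T$ and that already $k'W=T$ for some finite $k'$ should be backed by the standard facts that the closure of this open subsemigroup is a closed subsemigroup of a compact group, hence an open subgroup, hence $T$ by connectedness, after which density gives the union itself equals $T$, and the ascending open cover by the sets $nW$ translated by a small neighborhood of $0$ gives the finite $k'$ by compactness. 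Finally, your divisor need not be reduced, but the statement only asks for a nonzero weakly compatible section, so this is harmless (compare Remark \ref{curvetrue}).
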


\begin{proof}
Let $O_1,\dots,O_r$ be the different ovals of $X(\R)$. For $i=1,
\dots,r$, fix a point $Q_i\in S\cap O_i$. Let $J$ be the Jacobian of
$X$, and let $J(\R)_0$ be the identity component of the real Lie
group $J(\R)$. The class of a Weil divisor $D=\sum_Pn_PP$ of degree
zero on $X$ lies in $J(\R)_0$ if and only if, for any oval $O$ of
$X(\R)$, the sum $\sum_{P\in O}n_P$ is even (\cite{sch:tams} Lemma
2.6). By \cite{sch:tams} Lemma 2.12 there is an integer $m\ge1$ such
that, for every $\alpha\in J(\R)_0$, there exist $m$ points $P_1,
\dots,P_m\in S$ with $\alpha=\sum_{i=1}^m[P_i-Q_1]$. We claim that
the assertion of the proposition is satisfied with $n:=m+r-1$.
Indeed, let $D$ be a Weil divisor with $\deg(D)=d\ge m+r-1$. There
exist (unique) numbers $a_2,\dots,a_r\in\{0,1\}$ such that the class
of $D-dQ_1-\sum_{i=2}^ra_i(Q_i-Q_1)$ lies in $J(\R)_0$. Hence there
exist $P_1,\dots,P_m\in S$ such that this divisor is equivalent to
$\sum_{j=1}^m[P_j-Q_1]$. Altogether we conclude
$$D\>\sim\>\Bigl(d-m-\sum_{i=2}^ra_i\Bigr)Q_1+\sum_{i=2}^ra_iQ_i+
\sum_{j=1}^mP_j.$$
\end{proof}

\begin{rems}\label{curvrems2}%
\hfil

1.\
If $S$ fails to meet one of the ovals, the assertion of Proposition
\ref{curvecase} clearly becomes false.
If $R$ is a non-archimedean real closed field, \ref{curvecase} may
also fail when $S$ intersects each oval in an open set, c.f.\
\cite{sch:tams} Rem.\ 2.15.
\smallskip

2.\
Part (b) of Theorem \ref{extotreschnitt} clearly fails in general,
even for $R=\R$ and $S=X(\R)$.
\smallskip

3.\
Now let $X/R$ be a singular integral projective curve with $|X(R)|=
\infty$. If the singularities of $X$ are real nodes (possibly with
nonreal tangents), Monnier \cite{Mo2} has proved (for $R=\R$) the
existence of an integer $n\ge0$ such that every divisor of degree
$\ge n$ is linearly equivalent to some totally real divisor supported
by regular points. On the other hand, there are singular curves with
ample divisors no multiple of which is equivalent to a totally real
divisor, as the next proposition shows.
\end{rems}

\begin{prop}
Let $X$ be an integral projective curve over $R=\R$ with $X(\R)\ne
\emptyset$. Assume that $X$ is singular, but all real points on $X$
are regular. Then for any $d\ge1$ there exists an invertible sheaf
$L$ on $X$ of degree $d$ such that, for any $n\ge1$, $L^{\otimes n}$
does not have any nonzero totally real global section.
\end{prop}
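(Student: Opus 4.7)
The plan is to construct a non-trivial continuous homomorphism $\phi\colon\mathrm{Pic}^0(X)(\R)\to\R$ that is bounded on the image of the Abel--Jacobi map $X(\R)\to\mathrm{Pic}^0(X)(\R)$; then a line bundle $L$ of degree $d$ whose degree-zero translate has sufficiently large $\phi$-value will have no power $L^{\otimes n}$ linearly equivalent to any totally real effective divisor.

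Let $\nu\colon\wt X\to X$ be the normalization. By hypothesis every real point of $X$ is regular, so $\nu$ is an isomorphism over $X(\R)$ and every singular closed point of $X$ is non-real, i.e.\ has residue field $\C$. From the standard short exact sequence $1\to\scrO_X^*\to\nu_*\scrO_{\wt X}^*\to\mathcal F\to 1$ (with $\mathcal F$ a skyscraper at the singular points) and $H^0(\scrO^*)=\R^*$ on both $X$ and $\wt X$, one reads off
$$0\to K\to\mathrm{Pic}(X)\to\mathrm{Pic}(\wt X)\to 0,\qquad K:=H^0(X,\mathcal F)\subset\mathrm{Pic}^0(X).$$
For each non-real singular point $Q$, the stalk $\mathcal F_Q$ is the $\R$-group $R_{\C/\R}G_Q$, a Weil restriction of a non-trivial $\C$-algebraic group $G_Q$ obtained by Galois descent from the local Picard contributions at the Galois-conjugate singularities of $X_\C$; in particular $\mathcal F_Q(\R)=G_Q(\C)$ is non-compact (for instance $\C^*$ when $Q$ is a non-real node). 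Since $X$ is singular, such a $Q$ exists, so $K(\R)$ is non-compact, and as $\mathrm{Pic}^0(\wt X)(\R)$ is the compact real locus of an abelian variety, the identity component of $\mathrm{Pic}^0(X)(\R)$ inherits a positive-dimensional vector-group summand from $K(\R)$. Projecting onto it yields the desired $\phi$ (concretely, $\log|\cdot|$ on the $\C^*$-summand at a non-real node).

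Now fix $P_0\in X(\R)$ and consider the Abel--Jacobi morphism $\kappa\colon X_\reg\to\mathrm{Pic}^0(X)$, $P\mapsto\scrO_X(P-P_0)$, which is a morphism of $\R$-schemes and hence continuous on real points. Since $X(\R)\subset X_\reg$ is compact, $\phi\circ\kappa$ is bounded: say $|\phi(\kappa(P))|\le C$ for all $P\in X(\R)$. Any totally real effective divisor $D=\sum_P m_P[P]$ of degree $\delta$ is supported on $X(\R)$, so $[D]-\delta[P_0]=\sum_P m_P\kappa(P)$ and hence $|\phi([D]-\delta[P_0])|\le\delta C$. Given $d\ge1$, choose $\delta\in\mathrm{Pic}^0(X)(\R)$ with $\phi(\delta)>dC$ (possible since $\phi$ is unbounded) and set $L:=\scrO_X(d[P_0])\otimes\delta$, of degree $d$. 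For every $n\ge1$, $L^{\otimes n}\otimes\scrO_X(-nd[P_0])$ equals $n\delta$ with $\phi$-value $n\phi(\delta)>ndC$, whereas the analogous class of any totally real effective divisor of degree $nd$ has absolute $\phi$-value at most $ndC$; hence $L^{\otimes n}$ cannot be the class of such a divisor, so it has no nonzero totally real global section. The principal technical obstacle is establishing the existence of $\phi$ non-trivial on $K(\R)$, which reduces to the Weil-restriction description of $\mathcal F_Q$ at each non-real singularity and the structure theory of commutative real algebraic groups.
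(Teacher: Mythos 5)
Your argument is correct, and its backbone is the same as the paper's: everything rests on the two facts that the real Lie group $J(\R)$ of the generalized Jacobian is non-compact (because every singular point is non-real, so the linear part of $J$ contributes the complex points of a nontrivial connected commutative group at each such point), while the Abel--Jacobi image of the compact set $X(\R)$ is compact. Where you genuinely differ is the endgame. The paper looks at the set of all degree-$d$ classes some multiple of which becomes totally real effective, and proves through a separate lemma -- splitting a commutative Lie group with finitely many components as $K\times V$ with $K$ compact and $V$ a vector group, then a convex-hull argument in $V$ -- that this set has compact closure, so non-compactness of $J(\R)$ supplies a class outside it. You instead extract from the same splitting an unbounded continuous homomorphism $\phi\colon J(\R)\to\R$ that is bounded, say by $C$, on the Abel--Jacobi image, and choose the degree-zero twist $\delta$ with $\phi(\delta)>dC$; the homogeneity $\phi(n\delta)=n\phi(\delta)$ then disposes of all powers $L^{\otimes n}$ simultaneously, so the quantifier ``some $n\ge1$'' is absorbed by linearity rather than by the paper's compactness lemma about $n$-fold sums -- a modest but real simplification. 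A few points to tighten, none of them gaps: (i) you construct $\phi$ only on (a vector summand of) the identity component; either extend it by $\phi(x)=\frac1e\phi_0(ex)$ with $e$ the exponent of the finite component group, or quote directly that a commutative Lie group with finitely many components is $K\times\R^m$ and project to $\R^m$; (ii) the Weil-restriction description of the kernel of $\mathrm{Pic}(X)\to\mathrm{Pic}(\wt X)$ is stated loosely, but all you need is that this kernel is the group of real points of a nontrivial connected commutative linear algebraic group, hence a closed non-compact subgroup of $J(\R)$, forcing $m\ge1$; (iii) identifying real points of the degree-zero Picard scheme with actual line bundles over $\R$ uses $X(\R)\ne\emptyset$, and the symbol $\delta$ is used both for a degree and for the chosen class. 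With these cosmetic repairs the proof stands.
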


\begin{proof}
Let $J$ be the generalized Jacobian of $X$. From the assumption it
follows that the real Lie group $J(\R)$ is not compact. Fix a point
$P_0\in X(\R)$, and consider the map $\varphi\colon X(\R)\to J(\R)$,
$\varphi(P)=[P-P_0]$, the class of the locally principal Weil divisor
$P-P_0$. ($\varphi$~extends to a morphism of varieties, but we only
need that $\varphi$ is a continuous map.) The image of $\varphi$ is a
compact subset of $J(\R)$. Fix $d\ge1$, and consider the set $TR_d$
of all divisors $D$ of degree $d$ for which there exists $n\ge1$ such
that $nD$ is equivalent to an effective totally real divisor. We show
that the subset
$$\{[D-dP_0]\colon D\in TR_d\}$$
of $J(\R)$ is compact, thereby proving the proposition. Let $D\in
TR_d$. There are $n\ge1$ and $P_1,\dots,P_{nd}\in X(\R)$ with $n[D]=
\sum_{i=1}^{nd}[P_i]$, and hence $n[D-dP_0]=\sum_{i=1}^{nd}\varphi
(P_i)$. Let $M_d\subset J(\R)$ denote the set of $d$-fold sums of
elements of $\im(\varphi)$. Then $M_d$ is a compact set, and $n[D-d
P_0]$ is a sum of $n$ elements of $M_d$. Therefore the claim follows
from the next lemma.
\end{proof}

\begin{lem}
Let $(G,+)$ be a commutative real Lie group with finitely many
connected components. Given a subset $M$ of $G$, let
$$C\>=\>\bigl\{x\in G\colon\ex n\ge1\ \ex y_1,\dots,y_n\in M\ nx=y_1
+\cdots+y_n\bigr\}.$$
If $\ol M$ is compact, then $\ol C$ is compact as well.
\end{lem}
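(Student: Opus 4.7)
The plan is to reduce the assertion to showing that $C$ is contained in a relatively compact subset of $G$; since $G$ is a Lie group, this immediately gives compactness of $\ol C$. For this I would invoke the structure theorem for compactly generated locally compact abelian groups: because the identity component $G_0$ has finite index in $G$ and is compactly generated (as a connected Lie group it is generated by any compact neighborhood of $0$), $G$ itself is compactly generated, and hence there is an isomorphism of topological groups
$$G\>\cong\>\R^a\times\mathbb{Z}^c\times K$$
for some integers $a,c\ge0$ and some compact abelian Lie group $K$. Let $\pi_V\colon G\to\R^a$ and $\pi_D\colon G\to\mathbb{Z}^c\subset\R^c$ denote the associated projections, and fix norms $\|\cdot\|$ on $\R^a$ and on $\R^c$.

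Since $\ol M$ is compact, the images $\pi_V(M)$ and $\pi_D(M)$ are bounded; choose $R$ with $\|\pi_V(y)\|\le R$ and $\|\pi_D(y)\|\le R$ for all $y\in M$. The key point is the following averaging identity: if $x\in C$ with $nx=y_1+\cdots+y_n$ and $y_i\in M$, then in the vector factor
$$\pi_V(x)\>=\>\frac{1}{n}\sum_{i=1}^n\pi_V(y_i),$$
which lies in the convex hull of $\pi_V(M)$, so $\|\pi_V(x)\|\le R$ by the triangle inequality. The identical averaging argument carried out inside $\R^c\supset\mathbb{Z}^c$ yields $\|\pi_D(x)\|\le R$, and the $K$-component of $x$ lies automatically in the compact set $K$. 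Consequently $C$ is contained in the product of the closed ball of radius $R$ in $\R^a$, the finite set $\{d\in\mathbb{Z}^c\colon\|d\|\le R\}$, and $K$, and this product is compact; hence $\ol C$ is compact.

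The only genuine obstacle is the initial setup: one needs to split off the non-compact part of $G$ in order to apply the averaging identity, and this is exactly what the structure theorem accomplishes. Once the decomposition is in hand, the remainder is a direct triangle-inequality estimate. The hypothesis that $G$ has only finitely many connected components is used precisely to guarantee compact generation, without which the structure theorem would no longer be available.
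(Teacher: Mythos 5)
Your proof is correct and takes essentially the same route as the paper: split off a compact factor of $G$ and bound $C$ in the Euclidean part by the averaging/convex-hull estimate. The only cosmetic difference is that you invoke the structure theorem for compactly generated LCA groups and carry along a $\mathbb{Z}^c$ factor (which the finitely-many-components hypothesis in fact forces to be trivial, the paper using $G\cong K\times V$ directly), but your averaging argument handles that factor correctly anyway.
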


\begin{proof}
$G$ is isomorphic to a direct product $K\times V$ where $K$ is a
compact Lie group and $V$ is a real vector space of finite dimension.
It suffices to prove that $\ol C+K$ is compact, so we may assume $G=
V=\R^n$. But then $C$ is contained in the convex hull of $M$, and so
the assertion is clear.
\end{proof}

\begin{lab}
As for affine curves, we only make the following remark. Let $X$ be
a nonsingular affine curve over $\R$, and let $S\subset X(\R)$ be a
non-empty open \sa\ subset that intersects every connected component
of $X(\R)$. Then every invertible sheaf on $X$ has a nonzero global
section that is weakly compatible with $S$. Indeed, let $X\subset
\ol X$ be the nonsingular completion of $X$.
Let $L$ be an invertible sheaf on $X$, and choose an invertible sheaf
$L_1$ on $\ol X$ with $L_1|_X\cong L$.
By Riemann-Roch, there exists an ample invertible sheaf $M$ on
$\ol X$ such that $M|_X$ is trivial.
By Proposition \ref{curvecase}, $L_1\otimes M^{\otimes n}$ has a
section on $\ol X$ that is weakly compatible with $S$, for some $n\ge
0$. Hence $L$ has such a section on $X$.
\end{lab}


\section{Projective Positivstellensatz}\label{secppss}%

In the following it is essential that the real closed base field is
$\R$, the usual real numbers (or a real closed subfield thereof). The
main result is:

\begin{thm}\label{mainthmtent}%
Let $X$ be a reduced projective $\R$-scheme without one-dimensional
irreducible components. Let $L$, $M$, $N_1,\dots,N_r$ be invertible
sheaves on $X$, with $M$ ample, and let regular global sections $h_i
\in H^0(X,N_i^{\otimes2})$ ($i=1,\dots,r$) be given such that the
\sa\ subset
$$K\>=\>\bigl\{\xi\in X(\R)\colon h_1(\xi)\ge0,\dots,h_r(\xi)\ge0
\bigr\}$$
of $X(\R)$ is Zariski dense in $X$. Let moreover $f\in H^0(X,L^
{\otimes2})$ and $g\in H^0(X,M^{\otimes2})$ be given with $f(\xi)>0$
and $g(\xi)>0$ for every $\xi\in K$. Then there is $n_0\ge0$ such
that, for all $n\ge n_0$, there exist sums of squares $s_e$ with
\begin{equation}\label{eqpo}%
fg^n\>=\>\sum_{e\in\{0,1\}^r}s_e\cdot h_1^{e_1}\cdots h_r^{e_r}
\end{equation}
in $H^0(X,L^{\otimes2}\otimes M^{\otimes2n})$. (More precisely,
$s_e$ is a sum of squares of elements of $H^0(X,F_e)$ with $F_e=
L\otimes M^{\otimes n}\otimes\bigotimes_iN_i^{-e_i}$, for $e\in
\{0,1\}^r$.)
\end{thm}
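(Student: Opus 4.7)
The proof follows the strategy sketched for $X=\P^n$ in the introduction, with the linear form $l$ there replaced by a totally real section $t$ of a suitable ample twist, whose existence is granted by Theorem \ref{extotreschnitt}. First, since $K$ is Zariski dense in $X$ and $X$ has no one-dimensional component, Theorem \ref{extotreschnitt}(a) applied to $L^\du$ and the ample sheaf $M$ produces, for every sufficiently large $k$, a regular and reduced global section $t\in H^0(X,L^\du\otimes M^{\otimes k})$ weakly compatible with $K$. Using Remark \ref{remextotreschnitt} together with a genericity argument inside the linear system $|L^\du\otimes M^{\otimes k}|$ (which is very ample once $k\gg0$), I would arrange in addition that no irreducible component of $\supp(t)$ is contained in $Z(h_i)$ for any $i$; this is possible because each $Z(h_i)$ is a fixed codimension-one subscheme while the dimension of the linear system grows with~$k$. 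Fix such a $t$, so that $ft^2\in H^0(X,M^{\otimes2k})$.

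Next, set $\Phi_n:=fg^nt^2\in H^0(X,M^{\otimes2(k+n)})$. Then $\Phi_n$ is nonnegative on $K$, strictly positive on $K\setminus Z(t)$, and vanishes to even order on $K\cap Z(t)$. The technical core of the argument is to establish, for all $n$ sufficiently large, a representation
$$\Phi_n=\sum_{e\in\{0,1\}^r}s'_e\cdot h_1^{e_1}\cdots h_r^{e_r}$$
in $H^0(X,M^{\otimes2(k+n)})$, with each $s'_e$ a sum of squares of sections of $M^{\otimes(k+n)}\otimes\bigotimes_iN_i^{-e_i}$. I would derive this by combining the local-global principle of \cite{sch:surf} with the local sum-of-squares theorem of \cite{sch:local}: at every real prime cone of every local ring of $X$, $\Phi_n$ is either strictly positive (so locally a sum of squares by \cite{sch:local}), or factors locally as $t^2$ times a strictly positive element on $K\cap Z(t)$, which is again locally a sum of squares. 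The factor $g^n$, with $n$ sufficiently large, supplies the ampleness of $M^{\otimes n}$ needed to glue these local representations into a global preordering identity.

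Once such an identity is available, write $s'_e=\sum_j\sigma_{e,j}^2$ with $\sigma_{e,j}\in H^0(X,M^{\otimes(k+n)}\otimes\bigotimes_iN_i^{-e_i})$, so that
$$\Phi_n=\sum_{e,j}\sigma_{e,j}^2\,h_1^{e_1}\cdots h_r^{e_r},$$
each summand being a section of $M^{\otimes2(k+n)}$ that is nonnegative on $K$. Since $t$ is reduced and weakly compatible with $K$, and $t$ divides the left-hand side, iterated use of Lemma \ref{keyproptotre}(a) gives $t\mid\sigma_{e,j}^2h_1^{e_1}\cdots h_r^{e_r}$ for every $(e,j)$. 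The choice of $t$ in the first step guarantees that $h_1^{e_1}\cdots h_r^{e_r}$ does not vanish identically on any irreducible component of $\supp(t)$, so Lemma \ref{primeigsects} yields $t\mid\sigma_{e,j}^2$, and then $t\mid\sigma_{e,j}$ (by Lemma \ref{keyproptotre}(b) applied to a single square, or directly since $Z(t)$ is reduced). Writing $\sigma_{e,j}=t\tau_{e,j}$ with $\tau_{e,j}\in H^0(X,F_e)$ and cancelling $t^2$ via Lemma \ref{kuerzen}(b), we arrive at the required identity \eqref{eqpo} with $s_e:=\sum_j\tau_{e,j}^2$.

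The hard part will be the second step: producing the global preordering identity for $\Phi_n$ with the correct twist structure, uniformly for all large $n$. Once it is in hand, the rest is bookkeeping: the first step is a direct application of Theorem \ref{extotreschnitt} plus genericity, and the third step is an algebraic cancellation relying on weak compatibility of $t$ and the primality-type Lemmas \ref{keyproptotre} and \ref{primeigsects}. The auxiliary multiplication by $t^2$ is precisely the device that bypasses the twist mismatch between $f$ and $g^n$ by transferring the problem into a single tensor power of $M$, at the cost of the cancellation that is then carried out at the very end.
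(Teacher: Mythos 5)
Your first and third steps reproduce the paper's argument faithfully (auxiliary weakly compatible reduced section $t$ of $L^\du\otimes M^{\otimes k}$ from Theorem \ref{extotreschnitt}(a) and Remark \ref{remextotreschnitt}, then cancellation of $t^2$ via Lemmas \ref{keyproptotre}, \ref{primeigsects} and \ref{kuerzen}). The genuine gap is precisely the step you flag as ``the hard part'': the preordering identity for $\Phi_n=fg^nt^2$ is the entire Positivstellensatz content of the theorem, and your sketch of it does not work as stated. There is no mechanism by which ``the ampleness of $M^{\otimes n}$ glues local sum-of-squares representations'': sums-of-squares decompositions are not sections of a coherent sheaf, so high twists do not remove any obstruction to patching them. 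Moreover, in the constrained case the local input you cite is wrong: \cite{sch:local} gives sum-of-squares representations for elements that are positive on the whole real spectrum of the local ring, whereas here $f$ and $g$ are only positive on $K$, so at a point of $Z(t)\cap K$ you only know positivity of the cofactor on $\wt K\cap\Sper(\scrO_{X,x})$; what is needed is membership in the local preordering $T_x$ generated by the $h_i$, which is \cite{sch:lpo} I, Prop.~2.1 (the reference \cite{sch:local} is the tool for the Nichtnegativstellensatz \ref{projnnss}, not for this theorem).

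The missing idea is the passage to an affine chart where an archimedean local-global principle applies. The paper first reduces (by applying the statement to $(f,g)$ and $(fg,g)$ with $M'=M^{\otimes2}$, which also yields \emph{all} $n\ge n_0$ rather than a single exponent --- a point your sketch leaves open) to the case of a section $g$ of an ample sheaf vanishing nowhere on $K$, takes $M$ very ample and works on the affine open $Y=X_g$. Dividing by suitable even powers of $g$ turns $fh^2$ and the constraints $h_ip_i$ into regular functions $\varphi$, $H_i\in\R[Y]$, and $K$ becomes a compact basic closed subset of $Y(\R)$, so the preordering $T\subset\R[Y]$ generated by $H_1,\dots,H_r$ is archimedean. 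This is what makes the local-global criterion of \cite{sch:surf}, Cor.~2.10, applicable; the local condition $\varphi\in T_x$ is then verified by factoring $\varphi=\varphi_1\cdot(h/u)^2$ with $\varphi_1$ strictly positive on $\wt K\cap\Sper(\scrO_{X,x})$ and invoking \cite{sch:lpo} I, Prop.~2.1. Clearing denominators by an even power of $g$ converts the affine identity back into the twisted global identity \eqref{equngek}, after which your cancellation step takes over. Without this route (compactness, archimedeanity, the affine local-global principle, and the correct local membership result), the representation of $\Phi_n$ you posit remains unproved.
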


See convention \ref{convent}.1 for the meaning of signs of sections
at points $\xi\in X(\R)$.

\begin{proof}
Under the assumptions of the theorem, we will prove the following
apparently weaker statement: Given $f'\in H^0(X,L^{\otimes2})$ and
$g'\in H^0(X,M')$, where $M'$ is ample and $f'(\xi)>0$, $g'(\xi)\ne0$
for all $\xi\in K$, there exists $n\ge0$ such that $f'g'^{2n}$ allows
an identity \eqref{eqpo}. Put $M'=M^{\otimes2}$ and apply this weaker
statement to $(f',g')=(f,g)$ and $(f',g')=(fg,g)$, to get Theorem
\ref{mainthmtent}.

Resetting notation, assume for the rest of this proof that $g\in H^0
(X,M)$ vanishes nowhere on $K$. We have to show that there is an
identity \eqref{eqpo} with some even number $n\ge0$. We can
immediately dispense with $0$-dimensional irreducible components of
$X$, and can therefore assume $\dim(X')\ge2$ for every irreducible
component $X'$ of $X$. Since we can replace $M$ by $M^{\otimes m}$
(and $g$ by $g^m$) for $m\ge1$, we may assume that $M$ is very ample.
So let $X\subset\P^n$ be a closed subscheme, and let $M=\scrO_X(1)$.
We consider the open affine subscheme $Y=X_g=\{x\in X\colon g(x)\ne
0\}$ of $X$. The ring $\R[Y]=H^0(Y,\scrO_X)$ of regular functions on
$Y$ is the homogeneous localization of the graded ring $S=\bigoplus_
{k\ge0}H^0(X,\scrO_X(k))$ by the element $g\in S_1$. In other words,
the elements of $\R[Y]$ are the fractions $\frac a{g^k}$ with $k\ge0$
and $a\in\Gamma(X,\scrO_X(k))$, with the usual rules for such
fractions.
By the hypothesis on $g$ we have $K\subset Y(\R)$. Note that $K$ is
closed in $X(\R)$ (\ref{convent}.2), therefore $K$ is a compact
\sa\ subset of $Y(\R)$.

For any invertible sheaf $P$ on $X$ that is generated by its global
sections, there exists $s\in H^0(X,P^{\otimes2})$ with $s(\xi)>0$ for
all $\xi\in X(\R)$. Indeed, if $P$ is generated by its global
sections $s_1,\dots,s_m$, then $s:=s_1^2+\cdots+s_m^2$ has this
property.

By Theorem \ref{extotreschnitt}(a) there exists an integer $t\ge0$
such that the invertible sheaf $L^\du(t)$ on $X$ has a regular and
reduced global section $h$ that is weakly compatible with $K$. By
Remark \ref{remextotreschnitt}, we can get in addition that $h$ does
not vanish identically on any irreducible component of $\supp(h_i)$,
for any $i\in\{1,\dots,r\}$.
Since the $h_i$ are regular, this means that also conversely no $h_i$
vanishes identically on any irreducible component of $\supp(h)$.
Consider
$$\varphi\>:=\>\frac{fh^2}{g^{2t}}\ \in\R[Y],$$
a regular function on $Y$ that is nonnegative on $K$.

For each $i=1,\dots,r$ we choose an integer $a_i\ge0$ such that the
sheaf $N_i':=N_i^\du(a_i)$ is generated by global sections, and we
choose $p_i\in H^0(X,N_i'^{\otimes2})$ such that $p_i>0$ on $X(\R)$.
Then
$$H_i\>:=\>\frac{h_ip_i}{g^{2a_i}}\ \in\R[Y],$$
and
$$K\>=\>\bigl\{\xi\in Y(\R)\colon H_1(\xi)\ge0,\dots,H_r(\xi)\ge0
\bigr\}.$$
In particular, the compact subset $K$ of $Y(\R)$ is basic closed.

We shall prove that $\varphi$ lies in the preordering generated by
$H_1,\dots,H_r$ in the ring $\R[Y]$. Before doing so we show how to
complete the proof of the theorem. By assumption, there are integers
$d_e\ge0$ and sums of squares $s_e\in H^0(X,\scrO_X(2d_e))$, for
$e\in\{0,1\}^r$, such that
$$\frac{fh^2}{g^{2t}}\>=\>\sum_{e\in\{0,1\}^r}\frac{s_e}{g^{2d_e}}
\cdot\frac{(h_1p_1)^{e_1}\cdots(h_rp_r)^{e_r}}{g^{2(a_1e_1+\cdots+
a_re_r)}}$$
in $\R[Y]$. After multiplying with a sufficiently high even power of
$g$ we get an identity
\begin{equation}\label{equngek}%
fh^2g^{2n}\>=\>\sum_{e\in\{0,1\}^r}t_eh_1^{e_1}\cdots h_r^{e_r}
\end{equation}
in $H^0(X,\scrO_X(2(t+n)))$, where $n\ge0$ and $t_e$ is $s_e$ times
an even power of $g$ and a product of some of the $p_i$.

Since the reduced section $h$ of $L^\du(t)$ is weakly compatible with
$K$, and since each summand $t_eh_1^{e_1}\cdots h_r^{e_r}$ on the
right is nonnegative on $K$, it follows from Lemma
\ref{keyproptotre}(a) that $h$ divides $t_eh_1^{e_1}\cdots h_r^{e_r}$
for all $e$. Since no $h_i$ vanishes identically on any irreducible
component of $\supp(h)$, it follows from Lemma \ref{primeigsects}
that $h$ divides $t_e$, for all $e$. Applying Lemma
\ref{keyproptotre}(b) we conclude that there are sums of squares
$t'_e$ with $t_e=h^2t'_e$, for $e\in\{0,1\}^r$. We can therefore
cancel $h^2$ from identity \eqref{equngek}, see Lemma \ref{kuerzen},
and get
$$fg^{2n}\>=\>\sum_et'_eh_1^{e_1}\cdots h_r^{e_r},$$
as desired.

It remains therefore to show that $\varphi$ lies in the preordering
$T$ of $\R[Y]$ that is generated by $H_1,\dots,H_r$. For this we use
the local-global criterion of \cite{sch:surf} Cor.\ 2.10. Note that
this criterion does apply here since the preordering $T$ of $\R[Y]$
is archimedean, $K$ being compact (see, e.g., \cite{PD} Thm.\
5.1.17). For any closed point $x$ of $Y$ we have to show $\varphi\in
T_x$, where $T_x$ denotes the preordering generated by $H_1,\dots,
H_r$ in the local ring $\scrO_{X,x}$.

Let $u$ be a local generator of $L^\du(t)$ at $x$. In $\scrO_{X,x}$
we have
$$\varphi\>=\>\frac f{g^{2t}}\cdot h^2\>=\>\frac{fu^2}{g^{2t}}\cdot
\Bigl(\frac hu\Bigr)^2,$$
and both factors $\varphi_1=\frac{fu^2}{g^{2r}}$ and $\frac hu$ are
in $\scrO_{X,x}$. It suffices to show $\varphi_1\in T_x$. By the
hypotheses we have $\varphi_1>0$ on $\wt K\cap\Sper(\scrO_{X,x})$,
which is the basic closed subset of $\Sper(\scrO_{X,x})$ associated
with the preordering $T_x$. From \cite{sch:lpo} I.\,Prop.\ 2.1 it
follows that $\varphi_1\in T_x$, and the proof is complete.
\end{proof}

It is worthwile to isolate the unconstrained case (no $h_i$):

\begin{cor}\label{mainunconstr}%
Let $X$ be a reduced projective $\R$-scheme without irreducible
components of dimension one for which $X(\R)$ is Zariski dense in
$X$. Let $L$, $M$ be invertible sheaves on $X$, and let $f$ resp.\
$g$ be strictly positive global sections of $L^{\otimes2}$ resp.\
$M^{\otimes2}$. If $M$ is ample then $fg^n$ is a sum of squares of
global sections of $L\otimes M^{\otimes n}$ for all sufficiently
large $n$.
\qed
\end{cor}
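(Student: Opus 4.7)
The plan is to derive this corollary as the case $r=0$ of Theorem \ref{mainthmtent}, with essentially no additional work. First I would set up the specialization: take the sheaves $N_1,\dots,N_r$ and sections $h_1,\dots,h_r$ in Theorem \ref{mainthmtent} to be the empty list. Then the set $K$ appearing in that theorem reduces to $X(\R)$, and the hypothesis that $K$ is Zariski dense in $X$ is exactly the hypothesis of the corollary. The reducedness of $X$, the absence of one-dimensional components, the ampleness of $M$, and the strict positivity $f(\xi)>0$, $g(\xi)>0$ on $K=X(\R)$ all transfer verbatim from one statement to the other.

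Next I would read off the conclusion. With $r=0$, the index set $\{0,1\}^r$ is a singleton consisting of the empty tuple $\emptyset$, the product $h_1^{e_1}\cdots h_r^{e_r}$ is the empty product $1$, and the sheaf $F_\emptyset$ defined in Theorem \ref{mainthmtent} collapses to $L\otimes M^{\otimes n}$. Hence identity \eqref{eqpo} becomes simply $fg^n=s_\emptyset$, where $s_\emptyset$ is a sum of squares of sections of $L\otimes M^{\otimes n}$, valid for every $n\ge n_0$. This is exactly the assertion of the corollary.

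The only point worth a second glance is that Theorem \ref{mainthmtent} is phrased for an unspecified number $r$ of constraints, and one wants to be sure the proof goes through at $r=0$. Tracing through the proof: the preordering $T\subset\R[Y]$ generated by $H_1,\dots,H_r$ becomes the preordering of sums of squares; it is still archimedean because $K=X(\R)$ is compact (as $X$ is projective), so the local–global criterion from \cite{sch:surf} applies; and the local step $\varphi_1\in T_x$ from \cite{sch:lpo} requires only strict positivity on $\wt K\cap\Sper(\scrO_{X,x})$, which is automatic here. So no step of the argument degenerates, and the corollary is genuinely a direct instance of the theorem rather than requiring a separate proof.
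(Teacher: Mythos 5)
Your proposal is correct and matches the paper exactly: the corollary is stated there with an immediate \qed precisely because it is the case $r=0$ (no constraints $h_i$, so $K=X(\R)$) of Theorem \ref{mainthmtent}, which is how you derive it. Your extra check that the theorem's proof does not degenerate at $r=0$ (the empty preordering is still archimedean since $K=X(\R)$ is compact) is a sensible sanity check but not needed, since the theorem as stated already covers $r=0$.
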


\begin{rem}
Suppose that, in the situation of Theorem \ref{mainthmtent}, we are
in the particular case where $L=M^{\otimes k}$ for some $k\ge0$. Then
we get identity \eqref{eqpo} for all sufficiently large $n\equiv k$
(mod~$2$) without the Zariski density hypothesis on $K$, and without
any condition on the irreducible components of $X$. Indeed, we can
directly apply Schm\"udgen's Positivstellensatz \cite{Sm} to the
regular function $\varphi=\frac f{g^k}$ on $X_g$, which is strictly
positive on $K$, and do not need to multiply with a factor~$h$.
\end{rem}

\begin{rem}\label{curvetrue}%
Theorem \ref{mainthmtent} remains true when $X$ is a nonsingular
projective curve and $|K\cap O|=\infty$ for every oval $O$ of
$X(\R)$. Indeed, Theorem \ref{extotreschnitt}(a) (plus Remark
\ref{remextotreschnitt}) is true in this case by Proposition
\ref{curvecase}. (We have actually not verified that $s$ in
\ref{extotreschnitt}(a) can also be chosen to be reduced, and leave
it to the reader to show that this is true.)
We do not know, however, if Theorem \ref{mainthmtent} holds true
without the dimension restriction on the irreducible components of
$X$, or without the condition that $K$ is Zariski dense in $X$.
\end{rem}

\begin{cor}\label{urform2}%
Write $\x=(x_0,\dots,x_n)$. Let $h_1,\dots,h_r\in\R[\x]$ be
homogeneous polynomials of even degree, and let
$$S\>=\>\{\xi\in\R^{n+1}\colon h_1(\xi)\ge0,\dots,h_r(\xi)\ge0\}.$$
Assume there is $\xi\in\R^{n+1}$ with $h_i(\xi)>0$ for $i=1,\dots,r$.
If $f$, $g\in\R[\x]$ are homogeneous of (even) positive degree and
strictly positive on $S\setminus\{(0,\dots,0)\}$, then $fg^m$ lies in
the preordering generated by $h_1,\dots,h_r$, for all sufficiently
large $m\ge0$.
\end{cor}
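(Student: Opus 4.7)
The plan is to translate the statement into the projective setting and apply Theorem \ref{mainthmtent} with $X = \P^n_\R$. Write $\deg(f) = 2d_f$, $\deg(g) = 2d_g$, and $\deg(h_i) = 2d_i$. The polynomials $f$, $g$, $h_i$ correspond naturally to global sections of $\scrO_X(d_f)^{\otimes2}$, $\scrO_X(d_g)^{\otimes2}$, $\scrO_X(d_i)^{\otimes2}$ respectively, so I set $L = \scrO_X(d_f)$, $M = \scrO_X(d_g)$, $N_i = \scrO_X(d_i)$; since $d_g \ge 1$, the sheaf $M$ is ample.

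Next I would verify the hypotheses of the theorem. The projective set
$$K \>=\> \{[\xi] \in \P^n(\R) \colon h_i(\xi) \ge 0,\ i = 1,\dots,r\}$$
is well-defined because the $h_i$ have even degree, and it is the image of $S \setminus \{0\}$ under $\R^{n+1} \setminus \{0\} \to \P^n(\R)$. The hypothesis that some $\xi \in \R^{n+1}$ satisfies $h_i(\xi) > 0$ for all $i$ forces $K$ to have non-empty euclidean interior, hence to be Zariski dense in $\P^n$. Under convention \ref{convent}.1, strict positivity of the homogeneous polynomial $f$ on $S \setminus \{0\}$ translates exactly to the sheaf-theoretic condition $f(\xi) > 0$ for all $\xi \in K$, and similarly for $g$. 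Assuming $n \ge 2$, $\P^n$ has no irreducible component of dimension one, so all hypotheses of Theorem \ref{mainthmtent} are satisfied.

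Invoking the theorem produces, for every sufficiently large $m$, an identity
$$fg^m \>=\> \sum_{e \in \{0,1\}^r} s_e\, h_1^{e_1}\cdots h_r^{e_r}$$
in $H^0(X, \scrO_X(2(d_f + md_g)))$, where each $s_e$ is a sum of squares of sections of $\scrO_X(d_f + md_g - \sum_i e_i d_i)$. For $m$ large, the exponent is non-negative for every $e$, and the $s_e$ become sums of squares of honest homogeneous polynomials in $\R[\x]$. Hence $fg^m$ lies in the preordering generated by $h_1,\dots,h_r$. The residual small cases are easy: for $n = 0$ each $h_i$ is a positive multiple of an even power of $x_0$ and the claim is trivial, while for $n = 1$ we invoke Remark \ref{curvetrue}, since $K$ meets the unique oval of $\P^1(\R)$ in a set with non-empty interior and is therefore infinite.

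The main obstacle, such as it is, is the bookkeeping: matching the affine positivity of a homogeneous polynomial on $S \setminus \{0\}$ with the sheaf-theoretic positivity of its associated section at the corresponding projective point, verifying Zariski density of $K$ from the assumption that some $\xi$ makes all $h_i$ strictly positive, and checking that the twisting degrees $d_f + md_g - \sum_i e_i d_i$ become non-negative for every $e$ once $m$ is large. All the real content lives in Theorem \ref{mainthmtent}.
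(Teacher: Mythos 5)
Your proposal is correct and follows exactly the paper's route: the corollary is just the affine reformulation of Theorem \ref{mainthmtent} for $X=\P^n$, with Zariski density of $K$ coming from the non-empty interior of $S$. Your extra bookkeeping (choice of $L$, $M$, $N_i$, nonnegativity of the twists, and the small cases $n\le1$ via Remark \ref{curvetrue}) only makes explicit what the paper leaves implicit.
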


\begin{proof}
That is, $fg^m$ satisfies an identity \eqref{eqpo}. This is just an
affine reformulation of Theorem \ref{mainthmtent} in the case $X=
\P^n$. Indeed, since $S$ has non-empty interior in $\R^{n+1}$, the
corresponding projectivized \sa\ subset of $\P^n(\R)$ is Zariski
dense in $\P^n$.
\end{proof}

\begin{rem}\label{ururform}%
In particular, if $f$, $g\in\R[\x]$ are positive definite forms of
positive degree, then $fg^m$ is a sum of squares of forms for all
$m\gg0$. In this form the result was recently applied to time
continuous dynamical systems by Ahmadi and Parrilo \cite{AP}. Using
the result, the authors prove that, whenever a homogeneous polynomial
Lyapunov function exists, there also exists one (of possibly higher
degree) which is a sum of squares and whose negative derivative is
also a sum of squares.
\end{rem}

In \cite{St}, Stengle proved that there exist positive semidefinite
forms $f$ in $\R[\x]$ of which no odd power $f^{2m+1}$ is a sum of
squares of forms. It follows directly from our result that this
cannot happen when $f$ is strictly positive definite:

\begin{cor}\label{oddpow}%
Let $f\in\R[\x]$ be a strictly positive definite form. Then there
exists an odd number $m\ge1$ such that $f^m$ is a sum of squares of
forms.
\qed
\end{cor}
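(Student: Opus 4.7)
The plan is to derive this directly from the unconstrained version of the projective Positivstellensatz that has already been set up. Specifically, I apply Corollary~\ref{urform2} (equivalently, the assertion recorded in Remark~\ref{ururform}) with the choice $g:=f$ and no constraints ($r=0$, so $S=\R^{n+1}$ and the preordering collapses to the cone of sums of squares). The hypotheses hold trivially: a positive definite form of positive degree must have even degree (an odd-degree form satisfies $f(-\xi)=-f(\xi)$ and so changes sign), and $f>0$ on $\R^{n+1}\setminus\{0\}$ is exactly the positive-definiteness assumption. The corollary then yields the existence of some $m_0\ge0$ such that $f\cdot f^m=f^{m+1}$ is a sum of squares of forms for every $m\ge m_0$.

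To obtain an \emph{odd} exponent, I simply choose any even integer $m\ge m_0$; then $m+1$ is odd and $f^{m+1}$ is a sum of squares, giving the desired representation. The edge case $\deg f=0$ (a positive constant $c$) is not covered by Corollary~\ref{urform2}, but here $f^1=(\sqrt c)^2$ is itself a square, so the conclusion is immediate.

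I do not anticipate any obstacle: the statement is packaged as a corollary precisely because the main work has already been done in Theorem~\ref{mainthmtent}, and only a parity bookkeeping step remains. The \qed placed directly after the corollary statement in the source reflects this.
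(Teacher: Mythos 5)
Your proposal is correct and matches the paper's intended (implicit) argument: apply Remark~\ref{ururform}/Corollary~\ref{urform2} with $g=f$ to get $f^{m+1}$ a sum of squares of forms for all large $m$, then pick $m$ even to make the exponent odd. The paper leaves exactly this one-line deduction to the reader (hence the bare \qed), so no further comparison is needed.
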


\begin{rem}
There is no such result for inhomogeneous strictly positive
polynomials. For example, the polynomial $f=x^3+(xy^2-x^2-1)^2$ in
$\R[x,y]$ is easily seen to be strictly positive on $\R^2$, but no
odd power of $f$ is a sum of squares in $\R[x,y]$, according to
Stengle \cite{St}.
\end{rem}

\begin{lab}
As another concrete example we may apply the Positivstellensatz to
products of projective spaces, or closed subvarieties thereof. For
example, if $\x=(x_0,\dots,x_m)$ and $\y=(y_0,\dots,y_n)$ are
coordinate tuples and $f$, $g\in\R[\x,\y]$ are positive definite
bihomogeneous forms, where the bidegree $(d_1,d_2)$ of $g$ satisfies
$d_1>0$ and $d_2>0$, then $fg^N$ is a sum of squares of bihomogeneous
forms for all $N\gg0$. Similarly for multihomogeneous forms.
\end{lab}

From Theorem \ref{mainthmtent}, it is possible to derive a necessary
and sufficient condition, in terms of sums of squares, for a global
section to be strictly positive. To keep the formulation simpler we
restrict to the unconstrained case, however the statement holds with
constraints as well. It also holds when $X$ is a nonsingular
projective curve (Remark \ref{curvetrue}).

\begin{cor}\label{ppss}%
Let $X$ be a reduced projective $\R$-scheme whose irreducible
components have dimension $\ne1$, let $L$ be an invertible sheaf on
$X$, and let $f\in H^0(X,L^{\otimes2})$. Then $f$ is strictly
positive if and only if there is a very ample sheaf $M$ for which
$L\otimes M$ is generated by global sections and for which the
following condition holds for some basis $g_1,\dots,g_r$ of $H^0
(X,M)$:

$(*)$ For any $h\in H^0(X,L\otimes M)$ there exist a real number
$\epsilon>0$ and an integer $n\ge0$ such that $g^n(gf-\epsilon h^2)$
is a sum of squares of sections of $L\otimes M^{n+1}$, where $g:=g_1
^2+\cdots+g_r^2$.

If $f>0$, condition $(*)$ is satisfied for any ample $M$ and any
sequence $g_1,\dots,g_r$ in $H^0(X,M)$ that generates $M$.
\end{cor}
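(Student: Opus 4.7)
The plan is to prove the two directions of the biconditional separately, with the sign calculus of \ref{convent}.1 doing the work in the ``if'' direction and Corollary \ref{mainunconstr} supplying the ``only if'' direction (and the final addendum). For the ``if'' direction I would fix an arbitrary $\xi\in X(\R)$ and aim to show $\sgn_\xi(f)=1$. Since $g_1,\dots,g_r$ generate $M$, some $g_i$ is nonzero at $\xi$, and a local trivialization of $M$ gives $\sgn_\xi(g)=\sgn_\xi(g^n)=1$. Because $L\otimes M$ is generated by global sections, I can choose $h\in H^0(X,L\otimes M)$ with $h(\xi)\ne0$, so $\sgn_\xi(h^2)=1$. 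Applying $(*)$ to this $h$, the section $g^n(gf-\epsilon h^2)$ is a sum of squares and so has nonnegative sign at $\xi$; multiplicativity of signs (\ref{convent}.1) yields $\sgn_\xi(gf-\epsilon h^2)\ge0$. In a local trivialization of $(L\otimes M)^{\otimes2}$ at $\xi$ this reads $A(\xi)\ge\epsilon B(\xi)>0$, where $A$ and $B$ represent $gf$ and $h^2$, so $\sgn_\xi(gf)=1$. One more application of multiplicativity with $\sgn_\xi(g)=1$ gives $\sgn_\xi(f)=1$, as required.

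For the ``only if'' direction I would first prove the final clause, from which the biconditional then follows. Let $f$ be strictly positive, $M$ an ample sheaf, and $g_1,\dots,g_r\in H^0(X,M)$ a generating sequence, so that $g:=\sum g_i^2\in H^0(X,M^{\otimes2})$ satisfies $\sgn_\xi(g)=1$ for every $\xi\in X(\R)$. Fix $h\in H^0(X,L\otimes M)$; then $gf$ is a strictly positive section of $(L\otimes M)^{\otimes2}$ and $h^2$ is nonnegative. Since $X$ is projective, $X(\R)$ is compact in the euclidean topology, and a finite trivializing cover argument produces $\epsilon>0$ with $gf-\epsilon h^2$ strictly positive on $X(\R)$. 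Applying Corollary \ref{mainunconstr} with sheaves $L\otimes M$ and $M$ in the roles of $L$ and $M$ there, and with data $(gf-\epsilon h^2,g)$, yields that $g^n(gf-\epsilon h^2)$ is a sum of squares of sections of $L\otimes M^{\otimes(n+1)}$ for all sufficiently large $n$, which is condition $(*)$. To deduce the biconditional, start from any very ample sheaf, replace it by a sufficient power (still very ample) so that $L\otimes M$ is globally generated (possible by Serre's theorem), pick a basis of $H^0(X,M)$ (which generates $M$ since $M$ is very ample), and apply the preceding.

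The one subtlety is that Corollary \ref{mainunconstr} carries a Zariski-density hypothesis on $X(\R)$ that is not explicitly restated in the present corollary; I would handle this by reducing to the union of those irreducible components of $X$ on which $X(\R)$ is dense, the remaining components contributing only vacuously to strict positivity. Beyond this, the argument is essentially bookkeeping: sign multiplicativity handles the cancellation in the ``if'' direction, while compactness of $X(\R)$ together with a local trivialization supplies the $\epsilon$ in the ``only if'' direction.
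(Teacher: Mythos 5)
Your two main steps coincide with the paper's proof: for the ``only if'' direction (and the final addendum), compactness of $X(\R)$ produces $\epsilon>0$ with $gf-\epsilon h^2>0$, and Corollary \ref{mainunconstr} applied to the pair $(gf-\epsilon h^2,\,g)$ with $L\otimes M$ in the role of $L$ gives $(*)$; for the ``if'' direction, global generation of $M$ and of $L\otimes M$ lets you pick $g_i$ and $h$ not vanishing at a given $\xi\in X(\R)$, and multiplicativity of signs turns the sum-of-squares identity into $f(\xi)>0$. This is exactly the argument of the paper, merely written out in more detail in the converse direction.

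The one place where you diverge is your patch for the Zariski-density hypothesis of Corollary \ref{mainunconstr}, and that patch does not work as stated. Passing to the union $X'$ of the irreducible components on which the real points are Zariski dense would only yield a sum-of-squares identity in $H^0(X',\,\cdot\,)$; condition $(*)$ requires an identity of global sections on $X$ itself. Sections over $X'$ need not extend to $X$, and even when they do (for $n\gg0$ one can lift using ampleness), the squares of the lifts agree with $g^n(gf-\epsilon h^2)$ only on $X'$, with no control on the remaining components --- which may well carry real points, just not Zariski-dense ones. No quick reduction should be expected here: the paper states explicitly (Remark \ref{curvetrue}) that it is unknown whether Theorem \ref{mainthmtent} holds without the density hypothesis on $K$. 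The paper's own proof attempts no such reduction; it simply invokes Theorem \ref{mainthmtent} (or Corollary \ref{mainunconstr}), i.e., the corollary is to be read with the Zariski-density hypothesis on $X(\R)$ in force, exactly as in \ref{mainunconstr}. If you drop your proposed reduction and instead carry that hypothesis along, your argument is correct and identical to the paper's.
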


\begin{proof}
First assume $f>0$, let $M$ be an ample invertible sheaf generated by
global sections $g_1,\dots,g_r$, and let $h\in H^0(X,L\otimes M)$.
Writing $g:=g_1^2+\cdots+g_r^2$, there exists $\epsilon>0$ such that
$gf-\epsilon h^2>0$. Indeed, this is clear locally on $X(\R)$, and by
compactness of $X(\R)$ it is true globally. By Theorem
\ref{mainthmtent} (or Corollary \ref{mainunconstr}, in this case),
there exists $n\ge0$ such that $g^n(gf-\epsilon h^2)$ is a sum of
squares of sections of $L\otimes M^{n+1}$.

Conversely, assume that $M$ is ample and generated by global sections
$g_1,\dots,g_r$, that $L\otimes M$ is generated by global sections as
well, and that $(*)$ holds for any $h\in H^0(X,L\otimes M)$. Given
any point $\xi\in X(\R)$, we choose $h$ such that $h(\xi)\ne0$, and
conclude $f(\xi)>0$.
\end{proof}

In the case of nonsingular curves or surfaces, we even have a
projective Nichtnegativstellensatz, due to the fact that $\rm psd=
sos$ holds in regular local rings of dimension $\le2$:

\begin{thm}\label{projnnss}%
Let $X$ be a connected nonsingular projective curve or surface over
$\R$ with $X(\R)\ne\emptyset$. Let $L$, $M$ be invertible sheaves on
$X$, with $M$ ample, and let $f\in H^0(X,L^{\otimes2})$ and $g\in H^0
(X,M^{\otimes2})$ be global sections with $f\ge0$ and $g>0$
everywhere on $X(\R)$. Then $fg^N$ is a sum of squares for all
sufficiently large $N\ge0$.
\end{thm}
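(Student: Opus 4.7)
The plan is to follow the architecture of the proof of Theorem \ref{mainthmtent} (in particular of its unconstrained Corollary \ref{mainunconstr}), replacing only the single local step where strict positivity was used by the known fact that every psd element of a regular local ring of dimension $\le 2$ is a sum of squares. That is the whole point of the curve/surface hypothesis.

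First I would set the scene exactly as in \ref{mainthmtent}. The connected nonsingular $X$ with $X(\R)\ne\emptyset$ is geometrically irreducible, so $X(\R)$ is Zariski dense in $X$ by Proposition \ref{artinlang}. Since $g>0$ everywhere on the compact set $X(\R)$, the affine open $Y:=X_g$ satisfies $Y(\R)=X(\R)=:K$, which is compact. After replacing $M$ by a power, I may assume $M$ very ample, $X\subset\P^n$ closed, and $M=\scrO_X(1)$. By Theorem \ref{extotreschnitt}(a) (for surfaces) or by Proposition \ref{curvecase} together with Remark \ref{curvetrue} (for nonsingular projective curves, noting that $K$ meets every oval in an infinite set), for some sufficiently large $t\ge0$ the sheaf $L^\du\otimes M^{\otimes t}$ admits a regular and reduced global section $h$ that is weakly compatible with~$K$. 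Form
$$\varphi\>:=\>\frac{fh^2}{g^{2t}}\ \in\R[Y];$$
since $f\ge0$ and $h^2,g^{2t}\ge0$ on $K$, we have $\varphi\ge0$ on $K$.

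Next I apply the local-global principle for sums of squares \cite{sch:surf} Cor.\ 2.10 in the ring $\R[Y]$, whose SOS preordering is archimedean because $K$ is compact (here the argument is identical to the one in the proof of Theorem \ref{mainthmtent}). The task reduces to showing, for each closed point $x\in Y$, that $\varphi\in\sum\scrO_{X,x}^2$. Factoring out the local square $(h/u)^2$ with $u$ a local generator of $L^\du\otimes M^{\otimes t}$ at $x$, it suffices to show $\varphi_1:=fu^2/g^{2t}\in\sum\scrO_{X,x}^2$, and we know $\varphi_1\ge0$ on $\Sper(\scrO_{X,x})$. This is where the proof departs from Theorem \ref{mainthmtent}: one no longer has strict positivity and so \cite{sch:lpo} I.\,Prop.\ 2.1 does not apply. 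Instead, since $X$ is a nonsingular curve or surface, $\scrO_{X,x}$ is a regular local ring of dimension $\le2$, and in such rings ``psd equals sos'' -- this is the crucial input that makes the Nichtnegativstellensatz work in this range of dimensions. This immediately yields $\varphi_1\in\sum\scrO_{X,x}^2$, and hence $\varphi\in\sum\R[Y]^2$ globally.

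Finally, clearing denominators produces an identity $fh^2g^{2n}=\sum_\alpha q_\alpha^2$ in $H^0(X,\scrO_X(2(t+n)))$ for some $n\ge0$. Since $h$ is weakly compatible with $K$ and the right-hand side is a sum of squares (in particular nonnegative on $K$), Lemma \ref{keyproptotre}(b) shows $h$ divides each $q_\alpha$. Cancelling $h^2$ via Lemma \ref{kuerzen} produces a representation of $fg^{2n}$ as a sum of squares of global sections of $L\otimes M^{\otimes n}$, which is the desired conclusion (and the parity shift from $n$ to arbitrary large $N$ is handled as in the reduction at the beginning of the proof of Theorem \ref{mainthmtent}, by simultaneously applying the argument to $(f,g)$ and $(fg,g)$). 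The main obstacle, and essentially the only nontrivial point, is the local step where one must pass from psd to sos in the local ring; this is precisely what forces the restriction $\dim X\le2$.
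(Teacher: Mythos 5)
Your proposal is correct and follows essentially the same route as the paper: copy the proof of Theorem \ref{mainthmtent}, obtain $h$ from Theorem \ref{extotreschnitt} (surfaces) resp.\ the curve results of Section \ref{secexcompsec}, and replace the local strict-positivity reference \cite{sch:lpo} by the fact that psd equals sos in regular local rings of dimension $\le2$ (\cite{sch:local} Thm.\ 4.8; elementary for discrete valuation rings). The only cosmetic difference is in the curve case, where the paper cites Remark \ref{curvrems1} and settles the final local/cancellation step by the elementary DVR fact, while you invoke Proposition \ref{curvecase} with Remark \ref{curvetrue} and Lemma \ref{keyproptotre}, thereby leaning on the reducedness of $h$ that Remark \ref{curvetrue} leaves to the reader (easily arranged here since $K=X(\R)$ is infinite on each oval).
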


\begin{proof}
One proceeds as in the proof of Theorem \ref{mainthmtent}. For
$\dim(X)=2$ the proof can be copied word by word up to the last line.
Due to the weaker assumption, we now only know $\varphi_1\ge0$
(rather than $\varphi_1>0$) on $\Sper(\scrO_{X,x})$. Replacing the
reference to \cite{sch:lpo} 2.1 by \cite{sch:local} Thm.\ 4.8
resolves the matter. When $\dim(X)=1$, use Remark \ref{curvrems1}
instead of Theorem \ref{extotreschnitt} to get the existence of $h$.
The last step in the proof of \ref{mainthmtent} becomes elementary
(an element in a discrete valuation ring $B$ containing $\frac12$
that is a sum of squares in the field of fractions of $B$ is a sum of
squares in $B$).
\end{proof}

\begin{rem}
Various generalizations of Theorem \ref{projnnss} are possible that
we won't explicate here in detail. For example, in the curve case we
can allow ordinary real nodes on $X$ (use Monnier's theorem mentioned
in \ref{curvrems2}, plus \cite{sch:local} Thm.\ 3.9). In the surface
case we can allow nonreal singular points on $X$, provided their
local rings are factorial (use \cite{sch:local} Cor.\ 4.9).
We can also give constrained versions of the Nichtnegativstellensatz
\ref{projnnss}, provided the constraints $h_i$ satisfy local
regularity conditions as in \cite{sch:surf} Thm.\ 3.2, or more
generally, as provided by the results of \cite{sch:lpo} I and~II. For
all these generalizations, it only matters that the preordering $T_x$
in $\scrO_{X,x}$ is saturated, for every closed point $x$ of $Y=X_g$
(see the end of the proof of Theorem \ref{mainthmtent}).
\end{rem}


\end{document}